\newtheorem*{observation}{Observation}
\title{On the growth of nonconvex functionals at strict  local minimizers\thanks{\textbf{Funding.} The first author was supported by the Alexander von Humboldt Foundation.}}
\author{Alberto Dom\'inguez Corella\thanks{Institut de Mathématiques de Jussieu - Paris Rive Gauche, Campus Pierre and Marie Curie, Sorbonne Universit\'e, 75005 Paris, France; \tt \email{alberto.of.sonora@gmail.com}}
	\and Tr\'i Minh L\^e\thanks{Institut f\"{u}r Stochastik und Wirtschaftsmathematik, VADOR E105-04, TU Wien, Wiedner Hauptstra{\ss }e 8, A-1040 Wien, \"Osterreich; \tt \email{minh.le@tuwien.ac.at}}}
\newcommand{\R}{\mathbb{R}}
\newcommand{\N}{\mathbb{N}}
\newcommand{\B}{\mathbb{B}}
\newcommand{\dom}{\mathrm{dom}}
\newcommand{\conv}{\overline{\mathrm{conv}}}
\newcommand{\im}{\mathrm{Im}}
\newcommand{\Lip}{\mathrm{Lip}}
\DeclareMathOperator*{\argmin}{arg\,min}
\begin{document}
	\maketitle
	\begin{abstract}
		We give new characterizations of growth conditions at strict local minimizers. The main characterizations are a variant of the so-called tilt stability property and an analog of the classical Polyak--\L{}ojasiewicz condition, where the gradient is replaced by linear perturbations. 
	\end{abstract}
	
	\begin{keywords}
		growth condition, nonconvex, subregularity, tilt stability, Polyak-\L ojasiewicz
	\end{keywords}
	
	\begin{MSCcodes}
		49J52, 49K40, 90C31, 90C48.
	\end{MSCcodes}

	
	\section{Overview of results and literature review} 
	Due to their crucial role in error estimates, growth conditions are a fundamental part of optimization. They are present in the analysis of discretization schemes, convergence rates, stability, propagation of errors, etc. 
	Their conceptual understanding  delineates the limits among assumptions that can be suppressed or must be maintained to achieve specific error estimates. This manuscript provides a couple of new and maybe unexpected equivalent conditions for the growth of a functional at a strict local minimizer. 
	
	\subsection{Overview of results}
	Let  $\big(X,\|{\cdot}\|\big)$ be a reflexive space over the real numbers and consider a proper lower semicontinuous function $f: X \rightarrow \R \cup \{ + \infty \}$.
	\smallbreak
	We assume that $f$ has a strict local minimizer $\bar x \in X$ and fix   $\delta>0$ such that 
	\[
	x \in \B\big(\bar x,\delta\big) \setminus \{\bar x\}\quad \implies \quad 	f(\bar x) < f(x).
	\]	
	In order to motivate the main characterization in the manuscript, let us consider the following observation illustrating a technique to  build sequences of local solutions to linearly perturbed problems. 
	\begin{observation}
		Let $\{x_n\}_{n\in\mathbb N}\subset X$ and $\{\xi_n\}_{n\in\mathbb N}\subset X^*$ be sequences such that 
		\begin{align*}
			x_n\in \argmin_{y\in \B(\bar x, \delta)} \big\{f(y) - \langle \xi_n, y\rangle\big\}\quad \forall n\in\mathbb N. 
		\end{align*}
		If $x_n\longrightarrow\bar x$ as $n\longrightarrow+\infty$, then $x_n$ is a local minimizer of $f-\langle \xi_n, \cdot\rangle$ for all $n\in\mathbb N$ sufficiently large. 
	\end{observation}	
	The technique relies on obtaining minimizers over a closed ball and ensuring their convergence. One usually employs polynomial growth conditions on the functional (such as (\ref{growth.condi}) below) to establish both convergence of minimizers and error estimates when perturbations tend to zero.
	A natural question is whether growth conditions can be avoided while still deriving stability estimates (such as (\ref{subreg}) and (\ref{PL.ineq})  below) that allow to obtain the same conclusions. 
	The answer is negative; a polynomial growth condition is necessary, at least when seeking Hölder error bounds.
	\smallbreak 
	Let us now state  our main result. 
	\begin{theorem}\label{thm.growth}
		Let $p, q \in (1, + \infty)$ be such that $p^{-1} + q^{-1}=1$. 
		The following statements are equivalent.
		\begin{itemize}
			\item[(i)] (Growth condition) There exists $\gamma>0$ such that 
			\begin{equation}\label{growth.condi}
				x\in \B\big(\bar x, \delta \big)\quad \implies\quad	{f}(x)\ge{f}(\bar x)+\gamma \|x-\bar x\|^p.
			\end{equation}
			
			\item[(ii)] (Tilt sub-stability) There exists $\kappa > 0$ such that, for any $\xi \in X^*$, 
			\begin{equation}\label{subreg}
				x\in \argmin_{y\in \B(\bar x, \delta)} \big\{f(y) - \langle \xi, y\rangle\big\} \quad \implies \quad\|x-\bar x\|\le\kappa \|\xi\|^{\frac{q}{p}}.
			\end{equation}	
			
			\item[(iii)]  (\L{}ojasiewicz--type inequality) There exists $\mu>0$ such that, for any $\xi\in X^*$, 
			\begin{equation}\label{PL.ineq}
				x\in \argmin_{y\in \B (\bar x, \delta)} \big\{f(y) - \langle \xi, y\rangle\big\} \quad \implies \quad f(x) - f(\bar x)\le\mu \|\xi\|^q.
			\end{equation}		
			
		\end{itemize}
	\end{theorem}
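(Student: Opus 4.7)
The strategy is the cycle (i)~$\Rightarrow$~(ii)~$\Rightarrow$~(iii)~$\Rightarrow$~(i); the first two links are elementary and the last is the substantive step. Both easy implications rest on the inequality produced by testing the tilted minimum $x$ against $\bar x$:
\[
	0 \;\leq\; f(x) - f(\bar x) \;\leq\; \langle \xi, x - \bar x\rangle \;\leq\; \|\xi\|\,\|x-\bar x\|.
\]
Under (i), combining this with $\gamma\|x-\bar x\|^p \leq f(x)-f(\bar x)$ gives $\|x-\bar x\|^{p-1}\leq \|\xi\|/\gamma$, and the identity $1/(p-1)=q/p$ turns this into (ii) with $\kappa=\gamma^{-q/p}$. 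Under (ii), the same chain yields $f(x)-f(\bar x) \leq \kappa\|\xi\|^{1+q/p}=\kappa\|\xi\|^q$, which is (iii).

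For (iii)~$\Rightarrow$~(i) my plan is Fenchel duality applied to the shifted function
\[
	h(z) \;:=\; f(\bar x + z) - f(\bar x)\ \text{for } \|z\|\leq\delta, \qquad h(z):=+\infty\ \text{otherwise,}
\]
which is proper lsc, nonnegative, with strict minimum $h(0)=0$. A tilted argmin $x$ corresponds to $z_0 := x-\bar x$ realizing the supremum in the Fenchel conjugate, so the Fenchel--Young equality at $z_0$ gives $h^*(\xi) + h(z_0) = \langle\xi,z_0\rangle$, and together with (iii) this yields $h^*(\xi)\leq\|\xi\|\,\|z_0\|$ and $h(z_0)\leq\mu\|\xi\|^q$. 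Since $h\geq h^{**}$, the growth (i) for $h$ will follow from the analogous growth of the biconjugate $h^{**}$; evaluating $h^{**}(z)=\sup_\xi\{\langle\xi,z\rangle - h^*(\xi)\}$ along a Hahn--Banach norming functional of $z$ and optimizing in the scalar amplitude, the target $h^{**}(z)\geq\gamma\|z\|^p$ follows (with $\gamma$ explicit in $C,p,q$) from any upper estimate of the form $h^*(\xi)\leq C\|\xi\|^q$ valid for small $\xi$.

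\paragraph{Main obstacle.} The task thus reduces to deducing this conjugate bound from (iii), and via the Fenchel--Young identity it is equivalent to the H\"older control $\|z_0(\xi)\|\leq C'\|\xi\|^{q/p}$ --- namely (ii) itself. My plan for (iii)~$\Rightarrow$~(ii) is by contradiction: if $\|z_0(\xi_n)\|/\|\xi_n\|^{q/p}\to\infty$, the bounds $h(z_0(\xi_n))\leq\mu\|\xi_n\|^q$ and $\|z_0(\xi_n)\|\leq\delta$ together with strict minimality of $\bar x$ and reflexivity of $X$ force $\xi_n\to 0$ and $z_0(\xi_n)\to 0$ in norm; the basic inequality then produces $(f(x_n)-f(\bar x))/\|x_n-\bar x\|^p \to 0$ along $x_n:=\bar x + z_0(\xi_n)$, i.e.\ a sequence witnessing vanishing $p$-growth near $\bar x$. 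A blow-up argument exploiting the scale invariance of (iii) under the rescaling $z\mapsto z/r$, $\xi\mapsto\xi/r^{p-1}$ (which preserves the constant $\mu$) should then convert this into a direct violation of (iii). The delicate point is performing the weak-limit passage in a general reflexive space, where only norm lower semicontinuity of $f$ is available.
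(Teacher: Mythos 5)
Your Part 1 (the cycle $(i)\Rightarrow(ii)\Rightarrow(iii)$) is the same elementary chain the paper uses, and it is correct, including the identification of the constants. The substantive implication $(iii)\Rightarrow(i)$, however, has a genuine gap.

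Your Fenchel-duality reduction is plausible in spirit, but it founders on two related points. First, the bound $h^{*}(\xi)\le C\|\xi\|^{q}$ that you need for the biconjugate argument must hold for \emph{all} small $\xi$, while the Fenchel--Young identity that you invoke requires the supremum defining $h^{*}(\xi)$ to be \emph{attained}. For a proper lsc (but not weakly lsc) $f$, there is no reason for the tilted minimum $z_0(\xi)$ to exist for every $\xi$; the set of attaining $\xi$ may be small. The paper deals with exactly this issue via Stegall's variational principle (Lemma~\ref{LemSte}): reflexivity and boundedness of $\dom f_\delta$ guarantee a \emph{dense} set of tilts for which a minimizer exists, and $\im\,\partial f_\delta$ is then dense in $X^*$; one then propagates the estimate from a dense set using Lemma~\ref{lem.Z_set}. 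You never introduce such a density mechanism. Second, the contradiction/blow-up step you propose for the reduced claim ``$(iii)\Rightarrow(ii)$'' hinges on passing to a limit $z_0(\xi_n)\to 0$ in norm. You correctly flag this as delicate, and indeed it fails: in a reflexive space one can only extract a weakly convergent subsequence, and since $f$ is only norm-lsc (not weakly-lsc, as $f$ is nonconvex), you cannot conclude $h(z^{*})\le\liminf h(z_0(\xi_n))=0$ for the weak limit $z^{*}$, hence cannot invoke strict minimality to force $z^{*}=0$ or norm convergence. There is also a secondary issue with the blow-up: the rescaling $z\mapsto z/r$, $\xi\mapsto\xi/r^{p-1}$ does preserve the constant $\mu$, but it also rescales the constraint ball $\B(0,\delta)$ to $\B(0,\delta/r)$, and any limit passage must account for this varying domain --- another place where weak compactness alone does not suffice without weak lower semicontinuity.

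The paper's route for $(iii)\Rightarrow(i)$ is structurally different and avoids $(ii)$ entirely. It applies Stegall to the function $\sigma=\bigl(f_\delta-f(\bar x)\bigr)^{1/p}$ rather than to $f_\delta$, producing a sequence of tilts $\xi_n$ whose minimizers $\hat x_n\neq\bar x$ exist and for which the norms $\|\xi_n\|$ converge to a prescribed value depending on $x$ (Lemma~\ref{lojalem1}); a concavity estimate (Lemma~\ref{lojalem2}) then converts a tilted minimizer of $\sigma$ into a tilted minimizer of $f_\delta$ with tilt $p\,\sigma(\hat x_n)^{p-1}\xi_n$, against which $(iii)$ can be tested. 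This produces the scalar inequality $1\le p^q\mu\|\xi_n\|^q$ directly, and passing $n\to\infty$ and $\upsilon\downarrow 1$ yields $(i)$, completely sidestepping the weak-limit problem your approach runs into.

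If you want to repair your own plan, the essential missing ingredient is a variational principle (Stegall, or Borwein--Preiss, or Ekeland combined with a fuzzy-sum device) to guarantee existence of exact or nearly-exact tilted minimizers for a dense set of $\xi$; compactness/weak-convergence arguments alone do not work here.
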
 
	\begin{remark}
		Since $\delta>0$ can be arbitrarily chosen among the radii of strict minimality, the results extend globally for functions with bounded domain. This is particularly useful for constrained optimization problems over closed bounded sets. For such an application, we refer to Section~\ref{elliptic}, where an elliptic tracking problem is studied; there, we establish a novel equivalence between second-order conditions and the sensitivity of solutions with respect to data perturbations.
	\end{remark}
	
	\begin{remark}\label{rmk.const-rela}
		The relation between constants in Theorem \ref{thm.growth} can be described as follows. 
		Implication $(i)\implies (ii)$ with $\kappa=\gamma^{-\frac{q}{p}}$; implication  $(ii)\implies (iii)$ with $\mu = \kappa$; implication  $(iii)\implies (i)$ with $\gamma = p^{-q}\mu^{-1}$. 
	\end{remark}
	
	\begin{remark}
		The results in this section extend to a slightly more general setting, where $X$ is a Banach space and $f: X \to \mathbb{R} \cup \{+\infty\}$ is a proper lower semicontinuous function such that $\conv(\dom f)$ has the Radon--Nikodým property. For more details and further extensions, we refer to Section~\ref{further}.
	\end{remark}
	
	An immediate consequence of the previous theorem is that Hölder stability with respect to nonlinear perturbations can be characterized through stability with respect to linear ones. This is relevant in connection with constructing sequences of local solutions as the observation made earlier remains valid when nonlinear perturbations are replaced by linear ones. 
	\begin{corollary}\label{thm.L_vs_NL}
		Let $\lambda\in(0,+\infty)$. The  following statements are equivalent.
		\begin{itemize}
			\item[(i)]  There exists $\kappa>0$ such that, for any $\xi\in X^*$,
			\begin{align*}
				x\in \argmin_{y\in \B(\bar x,\delta)} \big\{f(y) - \langle \xi, y\rangle\big\}  \quad\implies \quad\|x-\bar x\| \le \kappa \|\xi\|^{\lambda}.
			\end{align*}
			
			\item[(ii)]  There exists $\kappa>0$ such that, for any Lipschitz function $\zeta:X\to\mathbb R$, 
			\begin{align*}
				x\in \argmin_{y\in \B(\bar x,\delta)}\big\{f(y) + \zeta (y)\big\} \quad\implies\quad\|x-\bar x\| \le \kappa\hspace*{0.03cm}\big(\Lip\,\zeta\big)^{\lambda},
			\end{align*}
			where $\Lip\,\zeta$ denotes the Lipschitz constant of $\zeta$.
			\item[(iii)]  There exists $\kappa>0$ such that, for any convex  function $\varphi: X \to \R\cup\{+\infty\}$, 
			\begin{align*}
				x\in \argmin_{y\in \B(\bar x,\delta)}\big\{f(y) + \varphi(y)\big\} \quad\implies\quad\|x-\bar x\| \le \kappa\hspace*{0.02cm} |\nabla \varphi|(\bar x)^{\lambda},
			\end{align*}
			where $| \nabla \varphi |(\bar x)$ denotes the local metric slope of $\varphi$ at $\bar x$.
		\end{itemize}
	\end{corollary}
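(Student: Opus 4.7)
The plan is to pivot on Theorem~\ref{thm.growth}. Given $\lambda\in(0,+\infty)$, I would set $p:=1+1/\lambda$ and let $q$ denote its H\"older conjugate; then $\lambda=q/p$, so condition~(i) of the corollary is verbatim condition~(ii) of Theorem~\ref{thm.growth}. Consequently, (i) is equivalent to a polynomial growth condition
\begin{equation*}
f(x)\ge f(\bar x)+\gamma\,\|x-\bar x\|^{p}\qquad\forall\,x\in\B(\bar x,\delta),
\end{equation*}
for some $\gamma>0$, and the strategy reduces to pinning (ii) and (iii) to this same growth condition.

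For $(\mathrm{i})\Rightarrow(\mathrm{ii})$, I would let $x$ be a local minimizer of $f+\zeta$ in $\B(\bar x,\delta)$ and bound $f(x)-f(\bar x)\le\zeta(\bar x)-\zeta(x)\le(\Lip\,\zeta)\,\|x-\bar x\|$; combining with growth gives $\gamma\|x-\bar x\|^{p-1}\le\Lip\,\zeta$, i.e. $\|x-\bar x\|\le\gamma^{-\lambda}(\Lip\,\zeta)^{\lambda}$. The converse $(\mathrm{ii})\Rightarrow(\mathrm{i})$ is immediate, since $y\mapsto-\langle\xi,y\rangle$ is Lipschitz with constant $\|\xi\|$.

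The equivalence $(\mathrm{i})\Leftrightarrow(\mathrm{iii})$ runs in the same spirit but passes through the convex subdifferential. For $(\mathrm{iii})\Rightarrow(\mathrm{i})$, I would take $\varphi(y)=-\langle\xi,y\rangle$ and verify directly that $|\nabla\varphi|(\bar x)=\|\xi\|$. For $(\mathrm{i})\Rightarrow(\mathrm{iii})$, whenever $\partial\varphi(\bar x)\ne\emptyset$, any $u\in\partial\varphi(\bar x)$ yields $\varphi(\bar x)-\varphi(x)\le\|u\|\,\|x-\bar x\|$ by the subgradient inequality; combining this with $f(x)-f(\bar x)\le\varphi(\bar x)-\varphi(x)$ and the growth condition, I would deduce $\|x-\bar x\|\le\gamma^{-\lambda}\|u\|^{\lambda}$, and taking the infimum over $u$ I would obtain a bound in terms of $\mathrm{dist}(0,\partial\varphi(\bar x))$.

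The main obstacle, I expect, is converting this distance-to-subdifferential bound into the slope bound, i.e.\ establishing the convex-analytic identity $|\nabla\varphi|(\bar x)=\mathrm{dist}(0,\partial\varphi(\bar x))$. The inequality $\le$ is immediate from the subgradient inequality. For $\ge$, supposing $|\nabla\varphi|(\bar x)=s<+\infty$, the local lower bound $\varphi(y)\ge\varphi(\bar x)-(s+\varepsilon)\|y-\bar x\|$ (coming from the definition of the slope) can be globalized from a neighborhood of $\bar x$ to all of $X$ by exploiting convexity along segments $\bar x+t(y-\bar x)$ with small $t>0$. The globalized bound makes $\bar x$ a minimizer of $\varphi(\cdot)+(s+\varepsilon)\|\cdot-\bar x\|$, and the Moreau--Rockafellar sum rule then produces some $u\in\partial\varphi(\bar x)$ with $\|u\|\le s+\varepsilon$; letting $\varepsilon\to 0$ closes the identification. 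The degenerate case $\partial\varphi(\bar x)=\emptyset$ is then covered automatically, since the same argument shows $|\nabla\varphi|(\bar x)=+\infty$, making the target inequality vacuous.
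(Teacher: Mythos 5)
Your proposal is correct and follows the same overall strategy as the paper: pivot on Theorem~\ref{thm.growth} by choosing $p=1+1/\lambda$ (so $\lambda=q/p$), identify (i) of the corollary with tilt sub-stability and hence with $p$-growth of $f$ at $\bar x$, then derive (ii) and (iii) from the growth condition and recover (i) by specializing $\zeta=-\xi$ (resp.\ $\varphi=-\xi$). The one place you diverge is the $(i)\Rightarrow(iii)$ step. The paper bounds $\max\{0,\varphi(\bar x)-\varphi(x)\}/\|\bar x - x\|$ directly by $|\nabla\varphi|(\bar x)$, quoting the fact that for a proper convex function the local slope coincides with the global slope (citing \cite[Theorem~2.4.9]{AGS_2008}), and concludes immediately. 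You instead re-derive that globalization by hand (the ``convexity along segments'' argument, which is correct) and then route the conclusion through the convex-analytic identity $|\nabla\varphi|(\bar x)=\mathrm{dist}\bigl(0,\partial\varphi(\bar x)\bigr)$ via the Moreau--Rockafellar sum rule. That last detour is sound but superfluous: once you have globalized to $\varphi(\bar x)-\varphi(y)\le(s+\varepsilon)\|y-\bar x\|$ for all $y$, you can combine it with the growth inequality $\gamma\|x-\bar x\|^{p}\le f(x)-f(\bar x)\le\varphi(\bar x)-\varphi(x)$ and let $\varepsilon\to 0$ without ever touching $\partial\varphi(\bar x)$, which is exactly what the paper does. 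What the subdifferential route buys you is a self-contained derivation of the slope characterization (rather than a citation), at the cost of one extra tool (the sum rule) and the need to handle the boundary case $\partial\varphi(\bar x)=\emptyset$, which you correctly note is absorbed by $|\nabla\varphi|(\bar x)=+\infty$.
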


	\begin{remark}
	   The results of Corollary \ref{thm.L_vs_NL} apply verbatim to the \L{}ojasiewicz-type inequality in part (iii) of Theorem \ref{thm.growth}.
	\end{remark}
    
	Applying the above result, one can provide a straightforward proof of the convergence of the proximal point algorithm. This is because each point in the sequence can be viewed as a minimizer of a convex perturbation of the original problem.
	\begin{corollary}\label{thm.proximal}Let $p, q \in (1, + \infty)$ be such that $p^{-1} + q^{-1}=1$. 
		Let the growth condition (\ref{growth.condi}) be fulfilled with $\gamma>0$. Let  $x_0 \in \mathbb{B}(\bar x, \delta)$ and $\varepsilon\in(0,\gamma)$. Let $\{x_k\}_{k\in\mathbb N}\subset X$ be a sequence satisfying
		\begin{align}
			x_{k} \in \argmin_{y\in \B(\bar x, \delta)}\left\lbrace f(y) + \frac{\varepsilon}{p}\| y-x_{k-1}\|^p \right\rbrace\quad \forall k\in\mathbb N.
		\end{align}
		The following assertions hold.
		\begin{itemize}
			\item[(i)] $x_k$ is a local minimizer of $f + \displaystyle\frac{\varepsilon}{p}\|\cdot - x_{k-1}\|^p$ for all $k\in\mathbb N$.
			
			\item[(ii)]  $x_k\longrightarrow \bar x$ as $k\longrightarrow +\infty$ and 
			\begin{equation}\label{proxi.x}
				\|x_k-\bar x\| \le \big(\gamma^{-1} \varepsilon\big)^{\frac{kq}{p}}\|x_0-\bar x\| \quad \forall k\in \N.
			\end{equation}
			\item[(iii)]  $f(x_k)\longrightarrow f(\bar x)$ as $k\longrightarrow +\infty$ and
			\begin{equation}\label{proxi.f}
				f(x_k)-f(\bar x)\le \big(\gamma^{-1}\varepsilon\big)^{kq} \big(f(x_0)-f(\bar x)\big) \quad \forall k \in \N.
			\end{equation}
		\end{itemize}
	\end{corollary}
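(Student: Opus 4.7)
The entire corollary will follow from a single per-step Hölder contraction
$$\|x_{k+1}-\bar x\| \leq (\gamma^{-1}\varepsilon)^{q/p}\|x_k-\bar x\|,$$
so my plan is first to establish this contraction, and then to read off parts (i)--(iii) by induction and some bookkeeping.

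To get the contraction I will linearize the proximal penalty at $\bar x$. Since $\|\cdot\|^p$ is convex for $p>1$ and $X$ is reflexive, the map $y\mapsto\|y-x_k\|^p$ has a subgradient $\xi_k$ at $\bar x$ with $\|\xi_k\|\le p\|\bar x-x_k\|^{p-1}$. Minorizing $g_k(y):=f(y)+(\varepsilon/p)\|y-x_k\|^p$ by the corresponding affine lower bound and using $g_k(x_{k+1})\le g_k(\bar x)$ (valid because $\bar x$ is a feasible test point) yields the energy estimate
$$f(x_{k+1})-f(\bar x)\ \le\ \varepsilon\,\|x_k-\bar x\|^{p-1}\,\|x_{k+1}-\bar x\|.$$
Combining with the growth condition $\gamma\|x_{k+1}-\bar x\|^p\le f(x_{k+1})-f(\bar x)$ and cancelling one factor of $\|x_{k+1}-\bar x\|$ (the zero case being trivial) delivers the contraction, using the identity $1/(p-1)=q/p$.

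From there, part (ii) and (\ref{proxi.x}) follow immediately by induction, and convergence is secured by $\gamma^{-1}\varepsilon<1$. For part (i), the contraction gives $\|x_k-\bar x\|<\|x_0-\bar x\|\le\delta$ for every $k\ge 1$, so each iterate lies strictly inside $\B(\bar x,\delta)$; as an argmin over that ball, any such interior point is automatically a local minimizer of the unconstrained penalized functional $f+(\varepsilon/p)\|\cdot-x_{k-1}\|^p$. For part (iii), I feed the contraction back into the linearized energy estimate, iterate, and convert $\|x_0-\bar x\|^p$ into $f(x_0)-f(\bar x)$ via growth to obtain (\ref{proxi.f}).

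The arithmetic miracle driving (\ref{proxi.f}) is the identity $1+q/p=q$, a direct consequence of $p+q=pq$; without it the exponents do not line up. I expect this to be the main subtlety: the straightforward inequality $f(x_{k+1})-f(\bar x)\le(\varepsilon/p)\|x_k-\bar x\|^p$, obtained by just plugging $\bar x$ into $g_k$, does \emph{not} iterate to the stated rate. It is the linearized energy estimate, with its extra factor of $\|x_{k+1}-\bar x\|$, combined with the contraction already proven, that supplies the sharp exponent $(k+1)q$ in (\ref{proxi.f}).
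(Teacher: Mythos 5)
Your proof is correct, and it takes a genuinely different---more elementary---route than the paper's. The paper's proof invokes Corollary~\ref{thm.L_vs_NL} as a black box (applied to the convex penalty $\zeta_k=(\varepsilon/p)\|\cdot-x_{k-1}\|^p$, whose local slope at $\bar x$ is $\varepsilon\|x_{k-1}-\bar x\|^{p-1}$), reading off both recursions with the constants from Remark~\ref{rmk.const-rela}; this is by design, since the corollary is meant to showcase how the abstract characterizations yield a convergence result from a single conceptual principle. You instead derive the one-step energy estimate $f(x_{k+1})-f(\bar x)\le\varepsilon\|x_k-\bar x\|^{p-1}\|x_{k+1}-\bar x\|$ directly from the optimality of $x_{k+1}$ over $\B(\bar x,\delta)$ and a subgradient of $\|\cdot-x_k\|^p$ at $\bar x$, then couple it with the growth condition to obtain the norm contraction and the value recursion in one stroke. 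In effect you have inlined, for this particular penalty, the argument that the paper packages as Corollary~\ref{thm.L_vs_NL}(iii); this makes the computation self-contained and transparent, at the price of not exhibiting the abstract theorem in action. Your handling of part~(i) (iterates strictly inside the ball, hence local minimizers of the unconstrained penalized functional) and the exponent bookkeeping via $1+q/p=q$ and $(p-1)q/p=1$ match the paper's, and the constants agree. One tiny remark: existence of a subgradient of $y\mapsto\|y-x_k\|^p$ at $\bar x$ requires only continuity and convexity, not reflexivity, though the latter is a standing hypothesis so nothing is lost.
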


    {
	For Algorithm \ref{algo} to be effective, the initial point must be sufficiently close to a local minimizer; the main difficulty lies in identifying such a point without knowing the exact solution. This requirement represents the gap between the theoretical result and its practical implementability. In practice, Corollary~\ref{thm.proximal} only guarantees the existence of sequences generated by Algorithm~\ref{algo} that converge to a local minimizer with quantitative rates.
    }
	\begin{algorithm}\label{algo}
		\caption{Proximal point algorithm}
		\begin{algorithmic}[1]
			\Require Initial point $x_0 \in X$,  parameter $\varepsilon > 0$,   number of iterations $K \in \mathbb{N}$
			\State Initialize $x \longleftarrow x_0$
			\For{$k \gets 0$ to $K-1$}
			\State Update $x \longleftarrow$ a local minimizer of $f + \frac{\varepsilon}{p} \|\cdot-x\|^p$
			\EndFor
			\State \Return $x$
		\end{algorithmic}
	\end{algorithm}
	\subsection{Literature review}
	The local growth condition we analyze has previously been characterized in terms of the metric subregularity of subdifferential mappings. The first result in this direction appears in \cite[Theorem 4.3]{ZT_1995}. Subsequently, Aragón and Geoffroy characterized the metric subregularity of the subdifferential of convex functions with local growth; see \cite[Theorem 3.3]{AG_2008} and \cite[Theorem 2.1]{AG_2014}. This result was later extended to nonconvex functions using the limiting subdifferential in \cite[Theorem 3.1]{DMN_2014} and \cite[Theorem 3.4]{MO_2015}; see also \cite[Theorem 4.1]{ZN_2015} for an analogous result concerning the Clarke-Rockafellar subdifferential.
	Subsequent results in particular settings include the characterization for semi-algebraic functions \cite[Theorem 3.1]{DI_2015}, and for functions that can be represented as the sum of a generalized twice differentiable function and a subdifferentially continuous, prox-regular, twice epi-differentiable function \cite[Theorem 4.1]{CQT_2022}. Further characterizations involving second-order notions of subdifferentiability are given in \cite[Theorem 4.1]{WS_2016} and \cite[Theorem 3.2]{CHN_2021}.  In Subsection \ref{subsectionmetric}, we present several results concerning the relationship between growth conditions and the subregularity of the Fenchel-Moreau subdifferential for potentially nonconvex functions. Our arguments differ from those in \cite[Theorem 3.3]{AG_2008}. The main result we provide is that if the Fenchel-Moreau subdifferential mapping has a dense image, then the global subregularity of the subdifferential is equivalent to a global growth condition. {In Remark \ref{gremark}, we compare the strong metric subregularity of the Fenchel–Moreau subdifferential with other commonly used subdifferentials, such as the limiting and Clarke subdifferentials.}
	\smallbreak
	Concerning the relationship between \L ojasiewicz inequalities and local growth conditions,~\cite[Corollary 2]{VT_2009} proves that a generalized \L ojasiewicz inequality (where the gradient is replaced by the Fréchet subdifferential) implies a local growth condition. For convex functions,  \cite[Section 4.2]{BolteLoja} and \cite[Theorem 5]{BNPS_2017} establish the equivalence between the classic Łojasiewicz inequality and a local growth condition. More recently, the relationship between the \L ojasiewicz inequality and the metric subregularity property of the subdifferential has been considered in \cite[Proposition 3.1]{PL_2019}. In Subsection \ref{subloj}, we present a result relating global growth conditions to a version of the Polyak-\L ojasiewicz condition, where the gradient at a point is replaced by the set of perturbations that have the point as a minimizer. 
	\smallbreak
	The proximal point algorithm was originally developed in \cite{R_1976} for solving inclusions (generalized equations) of monotone operators. In \cite{P_2002}, Pennanen showed that the convergence of the proximal point algorithm could be guaranteed locally, provided the iterations start sufficiently close to a solution and the graph of the operator is maximally monotone in the vicinity. More recently, Rockafellar provided a convergence result in \cite{R_2019} using a nonsmooth notion of second-order sufficiency, which was termed variational convexity. We provide a proof of convergence for a proximal point algorithm (Algorithm \ref{algo} below), which illustrates how the results on nonlinear perturbations developed in this paper can be used to derive convergence estimates from a simple conceptual principle.


	\section{Preliminaries} \label{prems}
	
	\subsection{Notation}	
	Let $\big(X,\|{\cdot}\|\big)$ be a normed space over the real numbers and consider a proper function $f: X \to \R \cup \{ + \infty \}$, i.e, $\mathrm{dom} \, f \neq \emptyset$.
	\smallbreak
	The continuous dual of $X$ is denoted by $X^*$, and the duality pairing between $X$ and $X^*$ is denoted by $\langle\cdot,\cdot\rangle:X^*\times X\to\mathbb R$.  We make the slight abuse of notation of denoting $\|{\cdot}\|$ also for the dual norm. We write $\mathbb B(x, r)$ for the closed ball of radius $r\in(0,+\infty)$ centered at a point $x \in X$; we set $\mathbb{B}\big(x, + \infty\big) = X$. Analogously, we write $\mathbb{B}_{*}(\xi, r)$ for the closed ball in the dual space $X^*$ of radius $r$ centered at $\xi\in X^*$. Whenever we write $c>0$, we mean $c\in(0,+\infty)$, that is, we implicitly assume $c$ is finite. 
	The closed convex hull of a set $A\subset X$ is denoted by $\conv A$. 
	\smallbreak
	For $p\in[1,+\infty)$, the duality mapping $J_p:X\rightrightarrows X^*$ is given by 
	\[
	J_p(x) := \left\{ \xi \in X^* : \langle \xi, x \rangle = \|\xi\| \|x\| \,\,\,\text{and}\,\,\,\|\xi\|=\|x\|^{p-1} \right\}.
	\]	
	Observe that when $X$ is a Hilbert space and  $p = 2$, the duality mapping reduces to the identity mapping. Given a nonempty set $C\subset X$, its indicator function $\iota_C:X\to \mathbb R\cup\{+\infty\}$  is given by
	\begin{align*}
		\iota_{C}(x):= \begin{cases}
			0 & if\,\, \,\, x\in C\\
			+\infty & if \,\, \,\,x\notin C
		\end{cases}.
	\end{align*}
	The \textit{Fenchel-Moreau subdifferential} of $f:X\to\mathbb R\cup \{+\infty\}$ at point $x\in X$ is given by
	\begin{align}\label{subdifferential}
		\partial f(x):= \left\lbrace \xi\in X^*:\, \text{$x$ is a minimizer of ${f}-\xi$}\right\rbrace. 
	\end{align}
	Observe that the convex subdifferential of a nonconvex function can have an empty domain. In this section, and in the whole sequel, we only refer to this notion of subdifferential. 
	\smallbreak 
	The function $f: X \to \R \cup \{ + \infty \}$ is said to be coercive if for every $c>0$ there exists $\delta>0$ such that 
	\begin{align*}
		f(x) \ge c, \text{ for every } x\in X \setminus\mathbb B\big(0,\delta\big).
	\end{align*}
	We say that $f$ is superlinear if for every $c >0$ there exists $\delta>0$ such that 
	\begin{align*}
		f(x) \ge c\| x \|, \text{ for every } x\in X\setminus\mathbb B\big(0,\delta\big).
	\end{align*}
	We see that if $f$ is superlinear, then it must be coercive. 
	Notice that if $\dom f$ is bounded, then $f$ is automatically superlinear. 
	\smallbreak

	Given a function $\zeta:X\to \mathbb R$, its Lipschitz constant is given by 
	\begin{align*}
		\Lip\, \zeta:= \sup_{\substack{x, y \in X \\ x \neq y}} \frac{|f(x) - f(y)|}{\| x - y \|}.
	\end{align*}
	We say that $\zeta$ is Lipschitz if $\Lip\,\zeta < +\infty$.
	Given a proper function $\varphi:X\to \mathbb R\cup\{+\infty\}$ and $x\in X$,  the local metric slope of $\varphi$ at $x$ is given by
	\begin{align*}
		| \nabla \varphi |(x) := \begin{cases}
			\displaystyle \limsup_{y \to x} \frac{\max \{0, \varphi(x) - \varphi(y)\}}{\| x - y \|} & \text{if $x\in\dom\,\varphi$}\\
			+\infty & \text{if $x\notin\dom\,\varphi$}. 
		\end{cases}
	\end{align*}
	We notice that if $\dom\,\varphi=X$ and  $\Lip\,\varphi<+\infty$, then its local slope is finite everywhere in $X$ and $|\nabla\varphi|(x)\le \Lip\,\varphi$ for all $x\in X$.

	\subsection{Set--valued mappings} Let $X, Y$ be normed spaces and consider a set--valued mapping $\mathcal{F} : X \rightrightarrows Y$. The domain of $\mathcal{F}$ is denoted by $\dom \, \mathcal{F} := \{ x \in X: \mathcal{F}(x) \neq \emptyset \}$. For a subset $A \subset X$, we will write
	\begin{align*}
		\mathcal{F}(A) := \{ y \in Y: \text{ there exists $x \in A$ such that $y \in \mathcal{F}(x)$} \}.
	\end{align*}
	The image of $\mathcal F$ is given by $\im \, \mathcal{F} := \mathcal{F}(X)$. The inverse of  $\mathcal F$ is the set-valued mapping $\mathcal{F}^{-1}:Y \rightrightarrows X$ given by
	\begin{align*}
		\mathcal F^{-1}(y) : = \left\lbrace x\in X:\, y \in\mathcal F(x)\right\rbrace.
	\end{align*}
	The mapping $\mathcal F$ is said to be \textit{lower hemicontinuous} at $\bar x\in X$ if for every open set $V \subset Y$ such that $\mathcal F(\bar x)\cap V\neq\emptyset$ there exists an open neighborhood $U\subset X$ of $\bar x$ such that $\mathcal F(x)\cap V\neq\emptyset$ for all $x\in U$. 
	We recall now the notion of $\lambda$-metric  subregularity of a set--valued mapping, see \cite[Section 4]{CiDo}, \cite[Section 12]{D_2021} and \cite[Section 3I]{DR_2009} for more details.
	\begin{definition}
		We say that $\mathcal F:X\rightrightarrows Y$ is strongly $\lambda$-subregular at $\bar x\in\mathcal F^{-1}(0)$ with parameters  $\alpha\in(0,+\infty]$ and $\kappa \in(0,+\infty)$  such that
		\begin{align*}
			\|x-\bar x\|\le \kappa d(0,\mathcal F(x))^\lambda\quad   \forall x\in \mathbb B(\bar x,\alpha),
		\end{align*}
		where $d(0, \mathcal{F}(x)) := \inf \left\{ \| y \| : y  \in \mathcal{F}(x) \right\}$.
	\end{definition}
	
	\subsection{Radon-Nikod\'ym property} To end this section, we give a short review of sets with the Radon-Nikod\'ym property (RNP) and we refer to the specialized book of Bourgin~\cite{GeoRNP} for more details. Let us begin recalling the definition of dentability, see~\cite[Definition 6.3.2]{VarTech} or~\cite[Section 1.7.1]{Penot}.
	\begin{definition}\label{RNP}
		Let $X$ be a Banach space and $\mathcal W$ a nonempty subset of $X$.
		\begin{enumerate}
			\item[$(i)$] We say that $\mathcal W$ is dentable if for every $\varepsilon>0$ there exist $\delta>0$ and $\xi \in X^*$ such that $\mathrm{diam} \, S(\mathcal W,\xi,\delta)<\varepsilon$,
			where the slice $S(\mathcal{W}, \xi, \delta)$ is defined by
			\begin{align*}
				S\big(\mathcal W,\xi,\delta\big):=\{\,v\in\mathcal W: \, \langle \xi, v \rangle < \inf_{w\in\mathcal W} \langle \xi,  w \rangle +\delta\}.
			\end{align*}	
			\item[$(ii)$] We say $\mathcal W$ has the RNP if every bounded subset of $\mathcal W$ is dentable.
		\end{enumerate}
	\end{definition}
	There are many different characterizations of the RNP, see, e.g., the book \cite{HuffGeoRNP}. We mention that the family of spaces having the RNP  includes all reflexive spaces, see~\cite[Corollary 4.1.5]{GeoRNP}, in particular the $L^p$-spaces for $p\in(1,\infty)$.  There are nevertheless interesting spaces without the RNP, such as $L^1([0,1])$, see~\cite[Example 2.1.2]{GeoRNP}. However, sets might possess the property even if the underlying space does not have it. For example, it is known that weakly compact convex subsets of Banach spaces have the RNP, see~\cite[Theorem 3.6.1]{GeoRNP}.
	\smallbreak
	One of the main applications of sets with RNP is the so-called Stegall's variational principle, see \cite[Theorem 1.153]{Penot}; we will employ the following version of this principle. 
	\begin{lemma}\label{LemSte}
		Let $X$ be a  Banach space and $f:X\to\mathbb R\cup\{+\infty\}$ be a proper lower semicontinuous function bounded from below. Suppose that 
		\begin{itemize}
			\item[(i)] $\conv(\dom f)$ has the RNP;
			
			\item[(ii)] $f$ is superlinear.
		\end{itemize}
		Then, there exists a dense set $\Theta\subset X^*$ such that for every $\xi\in \Theta$, $f-\xi$ has a unique minimizer.
	\end{lemma}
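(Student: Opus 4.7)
The plan is to reduce the statement to the classical Stegall variational principle on a closed bounded convex subset with the RNP (the version cited as \cite[Theorem 1.153]{Penot}). The only real work is absorbing the potentially unbounded effective domain of $f$, and this is precisely what superlinearity is for. Since density is a local property, it suffices to show that every open ball $\mathbb{B}_*(\xi_0, r) \subset X^*$ meets $\Theta$, so I would fix such $\xi_0$ and $r$ throughout.

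First I would truncate. Picking any $x_0 \in \dom f$ gives the uniform upper bound
\[
\inf_{x \in X}\big(f(x) - \langle \xi, x\rangle\big) \le f(x_0) + (\|\xi_0\| + r)\|x_0\| =: C_0
\]
for every $\xi \in \mathbb{B}_*(\xi_0, r)$. Applying superlinearity with constant $\|\xi_0\| + r + 1$ yields some $R_0 > 0$ such that $f(x) - \langle \xi, x\rangle \ge \|x\|$ whenever $\|x\| > R_0$ and $\xi \in \mathbb{B}_*(\xi_0, r)$. Choosing $R$ larger than both $R_0$ and $C_0$, the function $f - \xi$ strictly exceeds $C_0$ outside $\mathbb{B}(0, R)$, so every global minimizer of $f - \xi$ on $X$ must lie in $\mathbb{B}(0, R)$. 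Consequently the problem of minimizing $f - \xi$ on $X$ and that of minimizing $g := f + \iota_{\mathbb{B}(0, R)}$ on $X$ have the same infima, the same minimizers, and hence the same uniqueness properties, all of this uniformly in $\xi \in \mathbb{B}_*(\xi_0, r)$.

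Then I would invoke Stegall on the set $C := \conv(\dom g)$. Since $\dom g \subset \dom f$ yields $C \subset \conv(\dom f)$, every bounded subset of $C$ is a bounded subset of a set with the RNP, hence dentable; so $C$ itself has the RNP. Moreover $C \subset \mathbb{B}(0, R)$ is bounded and $g|_C$ is proper, lower semicontinuous, and bounded below. The classical Stegall principle then provides a dense set of $\xi \in X^*$ for which $g - \xi$ admits a unique minimizer on $C$; any such $\xi$ lying in $\mathbb{B}_*(\xi_0, r)$ belongs to $\Theta$ by the previous paragraph, which establishes density. The main (and essentially only) obstacle is arranging the truncation \emph{uniformly} in $\xi \in \mathbb{B}_*(\xi_0, r)$ rather than pointwise: once the radius $R$ is chosen depending only on $\|\xi_0\|$ and $r$, the bounded-domain Stegall theorem finishes the argument directly.
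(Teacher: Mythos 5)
Your argument is correct but takes a genuinely different route from the paper. The paper's proof is a two-line reduction: since $f$ is superlinear, $f-\xi$ is bounded below for every $\xi$, hence $\dom f^* = X^*$, and then the result is read off directly from implication $(v)\implies(iii)$ of Lassonde's Theorem~5, which is a generalization of Stegall's principle in which the bounded-domain hypothesis is replaced by the condition $\dom f^* = X^*$. You instead keep the classical bounded-domain Stegall principle as your black box and do the work of converting superlinearity into boundedness yourself: you fix a dual ball $\mathbb{B}_*(\xi_0,r)$, use superlinearity with a slope exceeding $\|\xi_0\|+r$ to get a \emph{uniform} truncation radius $R$ such that all minimizers of $f-\xi$ for $\xi$ in that ball must lie in $\mathbb{B}(0,R)$, replace $f$ by $g = f+\iota_{\mathbb{B}(0,R)}$ (which preserves infima, minimizers, and hence uniqueness for those $\xi$), note that $\conv(\dom g)\subset\conv(\dom f)$ is bounded and inherits the RNP, and apply the classical Stegall principle to $g$. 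The uniformity of $R$ over the ball of perturbations is the essential point you correctly identify, and the verification that $g$ is proper, lsc, and bounded below is routine. The trade-off is clear: the paper's proof is shorter but leans on a less standard strengthening of Stegall (Lassonde's theorem), whereas yours is more elementary and self-contained, needing only the textbook bounded-set version at the cost of the explicit truncation bookkeeping.
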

	
	\begin{proof}
		By superlinearity of $f$, the function $f-\xi$ is bounded from below for any $\xi\in X^*$, therefore $\dom\,f^* = X^*$, where $f^*$ is the Fenchel conjugate of $f$. We can then employ implication $(v)\implies (iii)$ of  \cite[Theorem 5]{Lassonde} to conclude the result.
	\end{proof}
	Observe that assumption $(i)$ of Lemma \ref{LemSte} is trivially satisfied when the underlying space is reflexive.  We also notice that assumption $(ii)$ of Lemma \ref{LemSte} is automatically satisfied  when the domain of the function is bounded.

	\subsection{Technical lemmas}
	In this subsection, we always assume that $X$ is a Banach space and $h: X \rightarrow \R \cup \{ + \infty \}$  a proper lower semicontinuous function. We fix a global minimizer $\bar x\in X$ of the function $h$. 
	\begin{lemma}\label{lem.Z_set}
		Let $\kappa > 0$. Consider the set
		\begin{align*}
			Z := \bigcup_{x \in X} \left\{ \xi \in X^*: \xi \in \partial h(x) \text{ and } \Vert x - \bar x \Vert \leq \kappa \Vert \xi \Vert^{q/p} \right\}.
		\end{align*}
		Then, for any $\theta \in (0, 1]$, it holds
		\begin{align*}
			h(x) \geq h(\bar x) + \dfrac{1}{2} \left( \dfrac{\theta}{2 \kappa} \right)^{p/q} \Vert x - \bar x \Vert^p \,\,\,\,\text{for every } x \in \bar x + 2\kappa\theta^{-1}J_p^{-1}(\overline{Z}),
		\end{align*}
		where $\overline{Z}$ denotes the closure of $Z$. 
	\end{lemma}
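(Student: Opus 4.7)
The plan is to start from the defining property of $Z$: for every $\xi\in Z$ there is a witness point $x_\xi\in X$ with $\xi\in\partial h(x_\xi)$ and $\|x_\xi-\bar x\|\le\kappa\|\xi\|^{q/p}$. Since $\bar x$ is a global minimizer, $h(x_\xi)\ge h(\bar x)$, and by definition of the Fenchel--Moreau subdifferential, $h(y)\ge h(x_\xi)+\langle\xi,y-x_\xi\rangle$ for every $y\in X$. Combining these and using Cauchy--Schwarz together with the bound on $\|x_\xi-\bar x\|$ yields, for all $y\in X$,
\begin{equation*}
h(y)\ge h(\bar x)+\langle\xi,y-\bar x\rangle-\kappa\|\xi\|^{1+q/p}=h(\bar x)+\langle\xi,y-\bar x\rangle-\kappa\|\xi\|^{q},
\end{equation*}
where I used the conjugate exponent identity $1+q/p=q$. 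This is the crucial ``pointwise affine minorant'' estimate.

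Next I would extend this minorization from $Z$ to $\overline Z$. For each fixed $y$, the right-hand side is a continuous function of $\xi\in X^*$ (with the norm topology on $X^*$), so the inequality survives passage to the closure, giving the estimate for every $\xi\in\overline Z$ and every $y\in X$.

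Now take any $x\in\bar x+2\kappa\theta^{-1}J_p^{-1}(\overline Z)$ and write $x-\bar x=2\kappa\theta^{-1}u$ with some $u$ such that $J_p(u)\cap\overline Z\neq\emptyset$; pick $\eta$ in this intersection. By definition of $J_p$, $\langle\eta,u\rangle=\|\eta\|\|u\|=\|u\|^{p}$ (since $\|\eta\|=\|u\|^{p-1}$), and $\|\eta\|^{q}=\|u\|^{(p-1)q}=\|u\|^{p}$ (since $(p-1)q=p$). Applying the extended minorant inequality with $\xi=\eta$ and $y=x$ gives
\begin{equation*}
h(x)\ge h(\bar x)+2\kappa\theta^{-1}\|u\|^{p}-\kappa\|u\|^{p}=h(\bar x)+\kappa\bigl(2\theta^{-1}-1\bigr)\|u\|^{p}.
\end{equation*}

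The final step is elementary bookkeeping. For $\theta\in(0,1]$ we have $2\theta^{-1}-1\ge\theta^{-1}$, and from $\|x-\bar x\|=2\kappa\theta^{-1}\|u\|$ we get $\|u\|^{p}=(2\kappa\theta^{-1})^{-p}\|x-\bar x\|^{p}$. Substituting yields
\begin{equation*}
h(x)-h(\bar x)\ge\theta^{-1}\kappa(2\kappa\theta^{-1})^{-p}\|x-\bar x\|^{p}=2^{-p}\theta^{p-1}\kappa^{1-p}\|x-\bar x\|^{p},
\end{equation*}
and recognizing $2^{-p}\theta^{p-1}\kappa^{1-p}=\tfrac12\bigl(\theta/(2\kappa)\bigr)^{p-1}=\tfrac12\bigl(\theta/(2\kappa)\bigr)^{p/q}$ gives exactly the desired inequality. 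The only delicate point is the closure step, but since the minorant is affine in $\xi$ it extends painlessly to $\overline Z$; the rest is simply keeping the conjugate-exponent arithmetic and the duality-map identities straight.
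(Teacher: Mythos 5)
Your proof is correct and follows essentially the same route as the paper's: extract the inequality $h(y)\ge h(\bar x)+\langle\xi,y-\bar x\rangle-\kappa\|\xi\|^{q}$ from global minimality of $\bar x$, the definition of $\partial h$, and the defining bound of $Z$; pass to $\overline Z$; and evaluate at the element of $\overline Z$ produced by the duality map applied to the rescaled $x-\bar x$. The only differences are cosmetic (you state the affine minorant uniformly over $Z$ and close once by continuity, whereas the paper fixes the duality-map element first and approximates it by a sequence in $Z$, and your "Cauchy--Schwarz" is really just the dual-norm inequality $|\langle\xi,v\rangle|\le\|\xi\|\,\|v\|$ in a general normed space).
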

	
	\begin{proof}
		We note that $0\in Z$ since $\bar x\in (\partial h)^{-1}(0)$. Let $x \in \bar x + 2\kappa\theta^{-1}J_p^{-1}(\overline{Z})$. By definition of the duality mapping, there exists $\xi \in \overline{Z}$ such that  
		\begin{equation}\label{Hbthr}
			\Vert \xi \Vert = \left( \dfrac{\theta}{2 \kappa} \right)^{p - 1} \Vert x - \bar x \Vert^{p - 1} \text{ and } \langle \xi, x - \bar x \rangle = \Vert \xi \Vert \Vert x - \bar x \Vert.
		\end{equation}
		Let $\{\xi_n\}_{n\in\mathbb N}\subset Z$ be a sequence converging to $\xi$.  By definition of $Z$, for each $n\in \mathbb{N}$, there exists $x_n\in X$ such that $\xi_n\in\partial h(x_n)$ and $\|x_n-\bar x\| \le \kappa \|\xi_n\|^{q/p}$.  Now, since $h(\bar x) = \inf_{y\in X} h(y)$, one has
		\begin{align*}
			h(\bar x)-\langle \xi_n, x_n \rangle \le h(x_n)- \langle \xi_n, x_n \rangle = \inf_{y\in X} \{h(y)- \langle \xi_n, y \rangle \}\le h(x)- \langle \xi_n, x \rangle.
		\end{align*}
		Hence, for every $n \in\mathbb{N}$, we get
		\begin{align*}
			h(\bar x)- h(x)&\le \langle \xi_{n}, x_n - x \rangle = \langle \xi_{n}, x_n- \bar x  \rangle + \langle \xi_{n}, \bar x-x \rangle\\
			&\le \|\xi_n\|\|x-\bar x\| + \langle \xi_{n}, \bar x-x \rangle\le \kappa \|\xi_n\|^q + \langle \xi_n, \bar x-x \rangle.
		\end{align*}
		Letting $n \to \infty$, we have $h(\bar x)- h(x)\le \kappa \|\xi\|^q + \langle \xi, \bar x-x \rangle$. Using the identities in~\eqref{Hbthr},
		\begin{align*}
			h(\bar x) - h(x) & \le \kappa \|\xi\|^{q}-\|\xi\| \|x- \bar x\| = \frac{\theta^{p}}{2^p\kappa^{p-1}}\|x-\bar x\|^p -  \Big(\frac{\theta}{2\kappa}\Big)^{p-1}\|x-\bar x\|^p\\
			& =\left(\frac{\theta}{2\kappa}\right)^{p-1}\left(\dfrac{\theta}{2}-1\right)\|x-\bar x\|^p \le - \dfrac{1}{2}\Big(\frac{\theta}{2\kappa}\Big)^{p-1}\|x-\bar x\|^p.
		\end{align*}
		Therefore, we obtain  $h(x)-h(\bar x)\ge  2^{-1}(2\kappa)^{p - 1}\theta^{p-1}\|x-\bar x\|^p.$ Since $p/q=p-1$, the result follows.
	\end{proof}

	\begin{lemma}\label{lojalem1}
		Assume that $\conv(\dom\,h)$ has the RNP and  $\sqrt[\leftroot{1}\uproot{4}p]{h(\cdot)-h(\bar x)}$ is superlinear.
		For any $\upsilon \in (1,+\infty)$ and $x \notin(\partial h)^{-1}(0)$, there exists a sequence $\{\xi_n\}_{n\in\mathbb N}\subset X^*\setminus\{0\}$ satisfying 
		\begin{itemize}
			\item[$(i)$] $\displaystyle\argmin_{y\in X}\left\lbrace 	\sqrt[\leftroot{1}\uproot{4}p]{h(y)-h(\bar x)} - \langle \xi_{n}, y \rangle \right\rbrace\neq\emptyset$ for every $n\in \N$;
			
			\item[$(ii)$]  $\bar x \notin \displaystyle\argmin_{y\in X}\left\lbrace 	\sqrt[\leftroot{1}\uproot{4}p]{h(y)-h(\bar x)} - \langle \xi_{n}, y \rangle \right\rbrace$ for every $n\in\N$;
			
			\item[$(iii)$] $ \|\xi_n\| \to \upsilon \| x-\bar x\|^{-1} \displaystyle\sqrt[\leftroot{1}\uproot{4}p]{h(x)-h(\bar x)}$ as $n \to +\infty$.
		\end{itemize}
	\end{lemma}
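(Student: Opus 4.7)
The plan is to construct an explicit ``target'' functional $\tilde\xi$ that already satisfies (ii) and (iii) with equality, and then perturb it into the dense set supplied by Stegall's variational principle so that existence property (i) becomes available while (ii) and (iii) survive by continuity.

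First, I would set $g(\cdot):=\sqrt[p]{h(\cdot)-h(\bar x)}$. Since $\bar x$ is a global minimizer of $h$, the function $g$ is proper, lower semicontinuous, and nonnegative with $g(\bar x)=0$; in particular it is bounded below by $0$. By hypothesis $g$ is superlinear, and since $\dom g=\dom h$, the set $\conv(\dom g)$ inherits the RNP from $\conv(\dom h)$. Hence Lemma~\ref{LemSte} applied to $g$ furnishes a dense set $\Theta\subset X^*$ such that for every $\zeta\in\Theta$ the function $g-\langle\zeta,\cdot\rangle$ attains its (unique) minimum on $X$.

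Next, I would exhibit the candidate. Because $x\notin(\partial h)^{-1}(0)$, the point $x$ is not a minimizer of $h$, so $h(x)>h(\bar x)$. Restricting to the nontrivial case $x\in\dom h$, the quantity $\rho:=\upsilon\,\|x-\bar x\|^{-1}g(x)$ lies in $(0,+\infty)$. By the Hahn--Banach theorem, pick $\eta\in X^*$ with $\|\eta\|=1$ and $\langle\eta,x-\bar x\rangle=\|x-\bar x\|$, and set $\tilde\xi:=\rho\,\eta$. Then $\|\tilde\xi\|=\rho$ and $\langle\tilde\xi,x-\bar x\rangle=\upsilon\,g(x)$. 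Since $\upsilon>1$ and $g(\bar x)=0$, this gives
\[
g(x)-\langle\tilde\xi,x\rangle<g(\bar x)-\langle\tilde\xi,\bar x\rangle,
\]
so $\bar x$ is strictly beaten by $x$ in the $\tilde\xi$-perturbed problem.

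Finally, I would approximate: by density of $\Theta$, choose $\xi_n\in\Theta$ with $\xi_n\to\tilde\xi$. Then $\|\xi_n\|\to\|\tilde\xi\|=\rho$, delivering (iii); each $g-\langle\xi_n,\cdot\rangle$ attains its minimum, giving (i); and by continuity of $\zeta\mapsto\langle\zeta,\bar x\rangle-\langle\zeta,x\rangle$, the strict inequality displayed above persists for all sufficiently large $n$, so $\bar x$ cannot belong to the argmin and (ii) holds. Since $\tilde\xi\neq 0$, $\xi_n\neq 0$ eventually; discarding finitely many initial terms yields the required sequence. The main obstacle I anticipate is conceptual: one must apply Stegall's principle to the auxiliary functional $g$ rather than to $h$ itself, since only $g$ is assumed superlinear. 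Once this passage is made, Hahn--Banach tunes the magnitude of the candidate to exactly $\rho$, and the remainder is a routine density/continuity argument.
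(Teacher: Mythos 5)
Your proposal is correct and follows essentially the same route as the paper: apply Stegall's variational principle (Lemma~\ref{LemSte}) to the auxiliary functional $\sigma(\cdot)=\sqrt[p]{h(\cdot)-h(\bar x)}$, use Hahn--Banach to build a tilt whose norm equals the target limit $\upsilon\|x-\bar x\|^{-1}\sigma(x)$ and which makes $x$ strictly beat $\bar x$, then approximate by elements of the dense set $\Theta$. The only cosmetic difference is that you verify (ii) directly by continuity of the duality pairing, whereas the paper argues by contradiction along a subsequence; both hinge on the same strict gap $\upsilon\sigma(x)>\sigma(x)$.
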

	\begin{proof}
		Let $\sigma:X \to \R \cup\{+\infty\}$ be given by $\sigma(y):= \displaystyle\sqrt[\leftroot{1}\uproot{4}p]{h(y)- h(\bar x)}$. Let $x \notin (\partial h)^{-1}(0)$ and $\upsilon \in(1,+\infty)$. By the Hahn--Banach Theorem, there exists $\eta\in X^*$ such that 
		\begin{align}\label{HanhB}
			\langle \eta, x-\bar x \rangle = \|\eta\| \|x-\bar x\| \quad \text{and}\quad \|\eta\| = \frac{\upsilon \sigma (x)}{\|x-\bar x\|}.
		\end{align}
		Applying Lemma \ref{LemSte}, we can find a sequence $\{\eta_n\}_{n\in\mathbb N} \subset X^*$ such that
		\begin{align}\label{gel1}
			\argmin_{y\in X}\left\lbrace \sigma(y) - \langle \eta_n, y \rangle \right\rbrace\neq\emptyset\quad \text{and}\quad \|\eta_n-\eta\| \le \frac{1}{n}\qquad \forall n\in\mathbb N.
		\end{align}
		For $n$ large enough, we check that the sequence $\{ \eta_n \}_{n\in\mathbb N}$ satisfies the properties $(i)$--$(iii)$ in Lemma~\ref{lojalem1}. 
		We first observe that $\{ \eta_n \}_{n\in\mathbb N}$ is nonzero for $n \ge N_1$ large enough since the sequence converges to a nonzero element $\eta \neq 0$. For  $n \in \N$, by~\eqref{gel1}, there exists $\hat x_n \in X$ such that 
		\[ \displaystyle \hat x_n\in \argmin_{y\in X}\left\lbrace \sigma(y) - \langle \eta_n, y \rangle \right\rbrace.\]

		\noindent We show that there exists $N_2 \in\N$ such that $\hat x_{n}\neq \bar x$ for all $n\ge N_2$. If not,  there would exists a subsequence $\{\hat x_{n_k}\}_{k\in\N}$ such that $\hat x_{n_k}=\bar x$ for all $k\in\mathbb N$. Now, since each $\hat x_{n_k}=\bar x$ minimizes $\sigma - \eta_{n_k}$ and $\sigma(\bar x)=0$, we get
		\begin{align*}
			- \langle \eta_{n_k},\bar x \rangle \le \sigma (x) - \langle \eta_{n_k},  x \rangle, 
		\end{align*}
		which implies $\langle \eta_{n_k}, x-\bar x \rangle \le \sigma(x)$. Thanks to~\eqref{HanhB}, we obtain 
		\begin{align*}
			\upsilon \sigma(x) = \langle \eta, x-\bar x\rangle \le & ~ \sigma(x) + \langle \eta - \eta_{n_k}, x - \bar x \rangle \\
			\le & ~ \sigma(x) +  \|\eta_{n_k}-\eta\|\|x-\bar x\| \\
			\le & ~  \sigma(x)+ \frac{1}{n_k}\|x-\bar x\|.
		\end{align*}
		Sending $k \rightarrow \infty$, we get $\upsilon\sigma(x)\le\sigma(x)$, which makes a contradiction since $\sigma(x) > 0$ and $\upsilon > 1$.
		\noindent Define $\xi_{n} := \eta_{n + \max\{N_1, N_2\}}$ for each $n\in\N$. Thanks to the above observations, the sequence $\{ \xi_n \}_{n\in\mathbb N}$ satisfies the required properties in Lemma~\ref{lojalem1}.  
	\end{proof}
	Observe that the boundedness of $\dom\, f$ would be enough to satisfy the superlinearity assumption in the previous lemma. 
	\begin{lemma}\label{lojalem2}
		For any $x \in X$ and $\xi \in X^*$, if 
		\[
		x\in \argmin_{y\in X}\left\lbrace \sqrt[\leftroot{1}\uproot{4}p]{h(y)-h(\bar x)} - \langle \xi, y \rangle \right\rbrace,
		\]
		then $x \in \argmin_{y\in X}\left\lbrace h(y) -  p\sqrt[\leftroot{1}\uproot{4}q]{h(x)-h(\bar x)} \langle \xi, y \rangle  \right\rbrace.$
	\end{lemma}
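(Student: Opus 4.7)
The plan is to reduce the claim to an elementary convexity inequality for the map $t\mapsto t^p$ on $[0,+\infty)$. Let me set $\sigma(y):=\sqrt[p]{h(y)-h(\bar x)}$, which is well defined and nonnegative since $\bar x$ is a global minimizer of $h$. For an arbitrary $y \in X$ (we may assume $h(y)<+\infty$, else the desired inequality is vacuous) write $u:=\sigma(y)\ge 0$ and $v:=\sigma(x)\ge 0$. The assumption that $x$ minimizes $\sigma(\cdot)-\langle \xi,\cdot\rangle$ reads
\[
\langle \xi,y-x\rangle \;\le\; \sigma(y)-\sigma(x)\;=\;u-v.
\]

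The key step will be the tangent-line (convexity) inequality for the convex function $t\mapsto t^p$ on $[0,+\infty)$,
\[
u^p \;\ge\; v^p + p\,v^{p-1}(u-v).
\]
Since $pv^{p-1}\ge 0$, multiplying the minimization inequality above by this nonnegative quantity and chaining with the tangent-line bound yields
\[
p\,v^{p-1}\langle \xi,y-x\rangle \;\le\; p\,v^{p-1}(u-v) \;\le\; u^p-v^p \;=\; h(y)-h(x).
\]

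Because $(p-1)/p=1/q$, we have $p\,v^{p-1}=p\bigl(h(x)-h(\bar x)\bigr)^{1/q}=p\sqrt[q]{h(x)-h(\bar x)}$, so the last chain is exactly
\[
h(y)-p\sqrt[q]{h(x)-h(\bar x)}\,\langle \xi,y\rangle \;\ge\; h(x)-p\sqrt[q]{h(x)-h(\bar x)}\,\langle \xi,x\rangle,
\]
which is the desired conclusion. The only delicate corner is $v=0$, i.e.\ $h(x)=h(\bar x)$; in that case the coefficient $p\sqrt[q]{h(x)-h(\bar x)}$ vanishes and the statement reduces to $h(y)\ge h(x)$, which holds since $x$ is then itself a global minimizer of $h$. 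I do not anticipate any real obstacle: once the convexity inequality for $t^p$ is invoked, everything else is bookkeeping with the conjugate exponents, and the nonnegativity of $u$ and $v$ (guaranteed by global minimality of $\bar x$) is exactly what allows the tangent-line bound to be applied.
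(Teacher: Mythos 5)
Your proof is correct and is essentially the same argument as the paper's: both hinge on the inequality $u^p \ge v^p + p\,v^{p-1}(u-v)$ with $u=\sigma(y)$, $v=\sigma(x)$ (equivalently, the concavity estimate $\varphi(x)-\varphi(y)\le p\,\varphi(x)^{1/q}\bigl(\sqrt[p]{\varphi(x)}-\sqrt[p]{\varphi(y)}\bigr)$), chained with the minimization inequality $\langle\xi,y-x\rangle\le \sigma(y)-\sigma(x)$. The only cosmetic difference is in how that key inequality is obtained: you invoke the tangent-line bound for the convex map $t\mapsto t^p$ directly, whereas the paper applies the Mean Value Theorem to $t\mapsto t^{1/p}$ and then uses monotonicity of $s\mapsto s^{(1-p)/p}$; your route is a bit more streamlined and also handles the boundary case $\sigma(x)=0$ uniformly without a separate reduction.
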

	
	\begin{proof}It suffices to consider $x \not\in (\partial h)^{-1}(0)$; in this case  $h(x) > h(\bar x)$. Let $\varphi:X \to \R \cup \{+\infty\}$ be given by $\varphi(y):= {h(y) - h(\bar x)}$ for every $y \in X$. Note that $\varphi \geq 0$ since $\bar x$ is a global minimizer of $h$. Let  $y \in X$; we first suppose that $\varphi(y) < \varphi(x)$, using the Mean Value Theorem, we can find $t \in (0, 1)$ and $w = \varphi(y) + t(\varphi(x) - \varphi(y))$ such that
		\begin{align*}
			\sqrt[\leftroot{1}\uproot{4}p]{\varphi(x)} - \sqrt[\leftroot{1}\uproot{4}p]{\varphi(y)} = \frac{1}{p} w^{\frac{1 - p}{p}} (\varphi(x) - \varphi(y)).
		\end{align*}
		Since the function $s \mapsto s^{(1 - p)/p}$ is decreasing in $[0, + \infty)$ and $1/q = (p - 1)/p$, we infer that $w^{\frac{1 - p}{p}} \geq \frac{1}{\sqrt[\leftroot{1}\uproot{4}q]{\varphi(x)}}$. We conclude then that
		\begin{equation}\label{concavity}
			\frac{1}{p\sqrt[\leftroot{-1}\uproot{2}q]{\varphi(x)}}\big(\varphi(x)-\varphi(y)\big)\le \sqrt[\leftroot{1}\uproot{4}p]{\varphi(x)} - \sqrt[\leftroot{1}\uproot{4}p]{\varphi(y)}.
		\end{equation}
		Similarly, we can consider $y \in X$ such that $\varphi(y) \geq \varphi(x)$ and deduce the same estimate. Since $x$ minimizes $\sqrt[\leftroot{1}\uproot{4}p]{\varphi} - \xi$, we have 
		\[ \sqrt[\leftroot{1}\uproot{4}p]{\varphi(x)} - \sqrt[\leftroot{1}\uproot{4}p]{\varphi(y)} \le  \langle \xi, x -  y \rangle \quad \text{for every }y\in X.
		\]
		Combining this with \eqref{concavity} yields
		\begin{align*}
			\varphi(x) - \varphi(y) \le 	p\sqrt[\leftroot{1}\uproot{4}q]{\varphi(x)} \langle \xi, x-y \rangle\quad \text{for every } y\in X,
		\end{align*}
		which implies that $x \in \argmin_{y\in X}\{h(y)- p\sqrt[\leftroot{1}\uproot{4}q]{\varphi(x)}\,\langle \xi, y \rangle \}$.
	\end{proof}
	
	\noindent As a direct consequence of Lemma~\ref{lojalem1} and Lemma~\ref{lojalem2}, we obtain the following result.
	
	\begin{corollary}\label{coroloja}
		Assume that $\conv(\dom\,h)$ has the RNP and  $\sqrt[\leftroot{1}\uproot{4}p]{h(\cdot)-h(\bar x)}$ is superlinear.
		For any $\upsilon \in(1,+\infty)$ and $x\notin(\partial h)^{-1}(0)$, there exist sequences $\{\hat x_n\}\subset X\setminus\{\bar x\}$ and $\{\xi_n\}_{n\in\mathbb N}\subset X^*\setminus\{0\}$ satisfying the following properties, 
		\begin{itemize}
			\item[$(i)$] $p\sqrt[\leftroot{1}\uproot{4}q]{h(\hat x_n)-h(\bar x)}\,\xi_n\in \,\partial h(\hat x_n)$ for every $n\in\N$;
			
			\item[$(ii)$] $ \|\xi_n\| \to \upsilon \| x-\bar x\|^{-1} \displaystyle\sqrt[\leftroot{1}\uproot{4}p]{h(x) - h(\bar x)}$ as $n\to +\infty$.
		\end{itemize}
	\end{corollary}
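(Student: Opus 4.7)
The plan is to obtain the corollary by chaining Lemma \ref{lojalem1} and Lemma \ref{lojalem2} in the obvious way: Lemma \ref{lojalem1} supplies the linear perturbations $\{\xi_n\}$ and the associated minimizers of the $p$-th root functional, while Lemma \ref{lojalem2} converts each such minimizer into a point whose subdifferential contains the rescaled perturbation $p\sqrt[\leftroot{1}\uproot{4}q]{h(\hat x_n)-h(\bar x)}\,\xi_n$. No new analytical ingredient is needed; the work is entirely bookkeeping to verify that everything required by Lemma \ref{lojalem2} actually holds for the sequence produced by Lemma \ref{lojalem1}.

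More concretely, first I would fix $\upsilon\in(1,+\infty)$ and $x\notin(\partial h)^{-1}(0)$ and apply Lemma \ref{lojalem1}, whose hypotheses coincide verbatim with ours, to produce a sequence $\{\xi_n\}_{n\in\mathbb N}\subset X^*\setminus\{0\}$ satisfying properties $(i)$--$(iii)$ of that lemma. For each $n\in\mathbb N$ property $(i)$ guarantees that the set
\[
\argmin_{y\in X}\bigl\{\sqrt[\leftroot{1}\uproot{4}p]{h(y)-h(\bar x)}-\langle \xi_n,y\rangle\bigr\}
\]
is nonempty, so I pick some $\hat x_n$ in it; property $(ii)$ of Lemma \ref{lojalem1} forces $\hat x_n\neq \bar x$. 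Next, Lemma \ref{lojalem2} applied at $\hat x_n$ and $\xi_n$ yields
\[
\hat x_n\in\argmin_{y\in X}\bigl\{h(y)-p\sqrt[\leftroot{1}\uproot{4}q]{h(\hat x_n)-h(\bar x)}\,\langle\xi_n,y\rangle\bigr\},
\]
which by the very definition of the Fenchel--Moreau subdifferential \eqref{subdifferential} is precisely the statement
\[
p\sqrt[\leftroot{1}\uproot{4}q]{h(\hat x_n)-h(\bar x)}\,\xi_n\in\partial h(\hat x_n),
\]
giving conclusion $(i)$ of the corollary. Conclusion $(ii)$ is then inherited literally from property $(iii)$ of Lemma \ref{lojalem1}.

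There is essentially no obstacle; the only point that requires a moment's care is making sure that the choice of $\hat x_n$ in the argmin is consistent with Lemma \ref{lojalem2}, whose hypothesis is stated pointwise. This is automatic because Lemma \ref{lojalem2} does not require $\hat x_n\neq\bar x$ to produce the subgradient inclusion, and conversely the relation $\hat x_n\neq\bar x$ (hence $\{\hat x_n\}\subset X\setminus\{\bar x\}$) is ensured by Lemma \ref{lojalem1}$(ii)$. Note also that if $h(\hat x_n)=h(\bar x)$ for some $n$, the factor $p\sqrt[\leftroot{1}\uproot{4}q]{h(\hat x_n)-h(\bar x)}$ simply vanishes, and the inclusion $0\in\partial h(\hat x_n)$ still holds since $\bar x$ is a global minimizer and $h(\hat x_n)=h(\bar x)$; so conclusion $(i)$ is valid in all cases.
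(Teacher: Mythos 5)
Your proof is correct and follows exactly the route the paper intends: the corollary is stated there as a direct consequence of Lemma~\ref{lojalem1} and Lemma~\ref{lojalem2}, and your chaining of the two lemmas, with the observation that $\hat x_n\neq\bar x$ follows from Lemma~\ref{lojalem1}$(ii)$ and that the subgradient inclusion is just the definition \eqref{subdifferential} applied to the conclusion of Lemma~\ref{lojalem2}, is the expected argument. Your closing remark about the degenerate case $h(\hat x_n)=h(\bar x)$ is also consistent with the paper, which handles it at the start of the proof of Lemma~\ref{lojalem2}.
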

	The previous corollary is inspired in the approach used in the proof of  \cite[Theorem 2]{VT_2009}. There they employ Ekeland's principle to the $p$-square of the functions; in our proof, we employ a different device since we do not count with the fuzzy rule for the Fr\'echet subdifferential to transform the Lipschitz perturbation of  Ekeland's principle into a linear one. 
	
	\section{Proofs}\label{mainchar}
	\subsection{Proof of Theorem~\ref{thm.growth}} We divide the proof into three parts: the first covers the straightforward implications, while the remaining parts address the nontrivial ones. We point out that the second part is not necessary, but we give it because the argument is of  independent interest and can be adapted to a more general setting, see Theorem~\ref{thm.grow.subreg} in the next section.
	\smallbreak 
	\noindent \textbf{Part 1.} $(i)\implies(ii)\implies(iii)$.
	We fix $x \in \mathbb{B}(\bar x, \delta)$ and $\xi \in X^*$ such that    
	\begin{align*}
		x \in \argmin_{y \in \B(\bar x, \delta)} \{ f(y) - \langle \xi, y \rangle \}.
	\end{align*}
	This implies that $f(x) - f(\bar x) \leq \langle \xi, x - \bar x \rangle$. From growth condition~\eqref{growth.condi}, we infer that
	\begin{equation}\label{growth.01}
		\gamma\|x-\bar x\|^p\le {f}(x)-{f}(\bar x)\le \langle \xi, x-\bar x \rangle\le \| \xi\|\| x - \bar x \|..
	\end{equation}
	Using the above estimate, we get $\|x-\bar x\|\le \kappa \|\xi\|^{\frac{1}{p-1}}$ with $\kappa:=1/\gamma^{\frac{1}{p-1}}$; noting that $q/p=1/(p-1)$ gives $(i)\implies(ii)$. Finally, observe that 
	\begin{align*}
		{f}(x)-{f}(\bar x)\le \langle \xi, x-\bar x \rangle \le \| \xi\|\| x - \bar x \| \le \kappa \|\xi\|^{q},
	\end{align*}
	since $1 + q/p = q$; taking $\mu := \kappa$ yields $(ii)\implies(iii)$,
	\smallbreak
	\noindent \textbf{Part 2.}  $(ii)\implies(i)$. Denote $f_{\delta}(y) = f(y) +  \iota_{\B(\bar x, \delta)}(y)$ for every $y \in X$. Since $X$ is reflexive, the set $\conv(\dom\,f_\delta)$ has the RNP. Moreover, since $\dom f_\delta$ is bounded, $f_\delta$ is superlinear.
	Applying {Lemma~\ref{LemSte}} to the function $f_\delta$, we know that the image of $\partial f_{\delta}$ is dense in $X^*$, in symbols, $\overline{\im(\partial f_{\delta})} = X^*$. Due to condition~\eqref{subreg}, we have that for any $\xi \in \im(\partial f_{\delta})$, there exists $x\in (\partial f_{\delta})^{-1}(\xi)$ such that $	\| x - \bar x \| \leq \kappa \| \xi \|^{\frac{q}{p}}.$
	Set
	\begin{align*}
		Z_{\delta} := \bigcup_{x \in X} \left\{ \xi \in X^*: \xi \in \partial f_{\delta}(x) \text{ and } \Vert x - \bar x \Vert \leq \kappa \Vert \xi \Vert^{q/p} \right\}.
	\end{align*}
	We see that $\im(\partial f_{\delta}) \subset Z_{\delta}$ and hence  $X^* = \overline{Z}_{\delta}$, so $J_p^{-1}(\overline{Z}_{\delta}) = X$. Applying Lemma~\ref{lem.Z_set} to the function $h = f_{\delta}$ and $Z = Z_{\delta}$, we conclude that for any $\theta \in (0, 1]$, 
	\begin{align*}
		f_{\delta}(x) \geq f_{\delta}(\bar x) + \dfrac{1}{2} \left( \dfrac{\theta}{2 \kappa} \right)^{p/q} \Vert x - \bar x \Vert^p \quad \text{for every } x \in X.
	\end{align*}
	Using definition of $f_{\delta}$ and choosing $\theta = 1$, we obtain that 
	\begin{align*}
		f(x) \geq f(\bar x) + \dfrac{1}{2} \left( \dfrac{1}{2 \kappa} \right)^{p/q} \Vert x - \bar x \Vert^p\quad \text{for every } x \in \B(\bar x, \delta). 
	\end{align*}
	It is then enough to set $\gamma:=2^{-p}\kappa^{-\frac{p}{q}}$.
	\smallbreak
	\noindent \textbf{Part 3.} $(iii)\implies(i)$. Let $x \in \dom~f \cap \big(\B(\bar x, \delta) \setminus \{ \bar x \}\big)$ and let $\upsilon \in (1,+\infty)$. Set $f_{\delta}$ as  in the previous part. 
    Applying Corollary~\ref{coroloja} to the function $h = f_{\delta}$ and $x \not\in (\partial f_{\delta})^{-1}(0)$, we can find sequences $\{ \hat x_n \} \subset X \setminus \{ \bar x \}$ and $\{ \xi_n \} \subset X^* \setminus \{ 0 \}$ satisfying
	\begin{equation}\label{eqn.approx_seq}
		p\sqrt[\leftroot{1}\uproot{4}q]{f_{\delta}(\hat x_n)-f_{\delta}(\bar x)}\, \xi_n \in \partial f_{\delta}(\hat x_n)\,\,\,\forall n\in\mathbb N\,\,\,   \text{ and }\,\,\,\|\xi_n\| \to  \upsilon\frac{\sqrt[\leftroot{1}\uproot{4}p]{f_\delta(x)-f_\delta(\bar x)}}{ \|x-\bar x\|}.
	\end{equation}
	We first observe that $\hat x_n \in \dom~f \cap \B(\bar x, \delta)$. Furthermore, thanks to~\eqref{eqn.approx_seq} and the  definition of $f_{\delta}$, we infer that
	\begin{align*}
		\hat x_n \in \argmin_{y \in \B(\bar x, \delta)} \left\{ f(y) -  p\sqrt[\leftroot{1}\uproot{4}q]{f(\hat x_n)-f(\bar x)} \langle \xi_n, y \rangle \right\}.
	\end{align*}
	Using condition~\eqref{PL.ineq}, we get
	\begin{align*}
		f(\hat x_n) - f(\bar x) \leq p^q \mu  (f(\hat x_n) - f(\bar x)) \| \xi_n \|^q,
	\end{align*}
	which implies that $1 \leq p^q \mu \| \xi_n \|^q.$
	Letting $n \to \infty$ and using~\eqref{eqn.approx_seq}, we are led to  
	\begin{align*}
		\| x - \bar x \|^p \leq p^q \mu \upsilon^q (f(x) - f(\bar x)). 
	\end{align*}
	Letting $\upsilon \searrow 1$, we obtain  inequality~\eqref{growth.condi} with $\gamma = \frac{1}{p^q\mu}$. 
	\subsection{Proof of Corollary~\ref{thm.L_vs_NL}}
	\begin{proof}
		Let $p\in(0,+\infty)$ be such that $\lambda=q/p$.
		For any $\xi \in X^*$, we first observe that $\| \xi \| = \Lip(\xi) = | \nabla \xi |(\bar x)$. And so, it is straightforward to check implications $(ii) \Longrightarrow (i)$ and $(iii) \Longrightarrow (i)$. 
		\smallbreak
		\textit{$(i)\implies(ii)$.} We assume that $(i)$ is satisfied. Thanks to Theorem~\ref{thm.growth}, the function satisfies a growth condition around $\bar x$, that is, there exists $\gamma > 0$ such that
		\begin{align*}
			f(x) \geq f(\bar x) + \gamma \| x - \bar x \|^p, \text{ for every } x \in \mathbb{B}(\bar x, \delta).
		\end{align*}
		Let $\zeta: X \to \R$ be a Lipschitz function and $x \in \mathbb{B}(\bar x, \delta)$ such that
		\begin{align*}
			x \in \argmin_{y \in \mathbb{B}(\bar x, \delta)} \{ f(y) + \zeta(y) \}.
		\end{align*}
		This leads to
		\begin{align*}
			f(x) + \zeta(x) = \min_{y \in \mathbb{B}(\bar x, \delta)} \{ f(y) + \zeta(y) \} \leq  f(\bar x) + \zeta(\bar x).
		\end{align*}
		Applying the growth condition, we are led to
		\begin{align*}
			\gamma \| x - \bar x \|^p \leq f(x) - f(\bar x) \leq \zeta(\bar x) - \zeta(x) \leq \Lip(\zeta) \| x - \bar x \|,
		\end{align*}
		which implies that $\| x - \bar x \| \leq \gamma^{-q/p}(\Lip(\zeta))^{\frac{q}{p}}.$
		\smallbreak
		\textit{$(i)\implies(iii)$}. The proof can be done analogously as above.
		Let $\varphi:X\to \mathbb R\cup\{+\infty\}$ be a convex proper function and $x\in X$ such that $	x \in \argmin_{y \in \mathbb{B}(\bar x, \delta)} \{ f(y) + \varphi(y) \}.$ 	Without the loss of generality, we assume $x \in\dom\,\varphi\setminus\{\bar x\}$. Then, 
		\begin{align*}
			\gamma\|x-\bar x\|^p\le \varphi(\bar x) - \varphi(x)\le \frac{\max\{0,\varphi(\bar x) - \varphi(x)\}}{\|\bar x-x\|} \|x-\bar x\|
		\end{align*}
		Notice that for a proper convex function, the local slope is equal to the global slope \cite[Theorem 2.4.9]{AGS_2008}. Hence, the local slope of $\varphi$ at $\bar{x}$ can be determined by
		\begin{align*}
			| \nabla \varphi |(\bar x) = \sup_{y \neq \bar x} \dfrac{\max \{0, \varphi(\bar x) - \varphi(y)\}}{\| \bar x - y \|}.
		\end{align*}
		Whence the result follows. 
	\end{proof}

	\subsection{Proof of Theorem~\ref{thm.proximal}}

	\begin{proof}
		Recall that $\gamma > 0$ is the constant given in Theorem \ref{thm.growth}.	
		Assume that $\varepsilon<\gamma$. 
		For each $k\in\mathbb N$, define $\zeta_k :X \to \mathbb R$ by
		\begin{align*}
			\zeta_{k}(y):= \frac{\varepsilon}{p}\|y-x_{k-1}\|^p.
		\end{align*}
		For each $k \in \N$, the local slope of $\zeta_k$ at $\bar x$ can be explicitly determined by $|\nabla\zeta_k|(\bar x)= \varepsilon\|x_{k-1}-\bar x\|^{p - 1}$. 
		Observe that for each $k\in\mathbb N$, we have 
		\begin{align}\label{ismin}
			x_{k}\in \argmin_{y\in \mathbb B(\bar x, \delta)}\left\lbrace f(y) + \zeta_k(y)\right\rbrace.
		\end{align}
		The convergence $x_k \xrightarrow{k \to \infty}  \bar x$ and $f(x_k) \xrightarrow{k \to \infty} f(\bar x)$ will follow directly from the estimates~\eqref{proxi.x} and~\eqref{proxi.f}. 
		Item $(i)$ will also follow from this convergence, as for $k\in\mathbb N$ large enough, $x_k$ must be in the interior of $\mathbb B(\bar x,\delta)$ and hence it will be local minimizer of $f + \dfrac{\varepsilon}{p}\|\cdot - x_{k-1}\|^p$. 
		\smallbreak
		$(ii)$
		Employing Theorem \ref{thm.L_vs_NL} and (\ref{ismin}), we find that 
		\begin{align*}
			\|x_k-\bar x\| \le \kappa |\nabla\zeta_k|(\bar x)^{\frac{q}{p}}=\kappa \varepsilon^{\frac{q}{p}} \|x_{k-1}-\bar x\|,
		\end{align*}
		where $\kappa:=\gamma^{-\frac{q}{p}}$, see Remark \ref{rmk.const-rela}.
		Thanks to the previous recursion, we are led to
		\begin{align*}
			\|x_k-\bar x\|\le (\kappa \varepsilon^{\frac{q}{p}})^k\|x_0-\bar x\|= (\gamma^{-1}\varepsilon)^{\frac{qk}{p}}\|x_0-\bar x\|.
		\end{align*}
		Since, $\gamma^{-1}\varepsilon<1$, we obtain $x_k\to \bar x$.
		\smallbreak
		$(iii)$
		Repeating  the argument as in Part 1 of the proof of Theorem \ref{thm.growth} in the same fashion as $(iii)\implies (i)$ of Corollary \ref{thm.L_vs_NL}, 
		we arrive at the estimate
		\begin{equation}\label{proxi.01}
			f(x_k) - f(\bar x) \leq \mu |\nabla\zeta_k|(\bar x)^{q} = \mu \varepsilon^q \|x_{k-1} - \bar x\|^p,
		\end{equation}
		where $\mu:=\gamma^{-\frac{q}{p}}$, see Remark \ref{rmk.const-rela}. Additionally, due to the growth condition, 
		\begin{equation}\label{proxi.02}
			\| x_{k - 1} - \bar x \|^p \leq  \gamma^{-1} \big(f(x_{k - 1}) - f(\bar x)\big).
		\end{equation}
		Combining \eqref{proxi.01} and \eqref{proxi.02}, we obtain
		\begin{align*}
			f(x_k) - f(\bar x) \leq  \gamma^{-q} \varepsilon^{q} (f(x_{k - 1}) - f(\bar x)). 
		\end{align*}
		This recursive relation leads to
		\begin{align*}
			f(x_k)-f(\bar x)\le \big(\gamma^{-1}\varepsilon\big)^{qk}\big(f(x_0)-f(\bar x)\big).
		\end{align*}
		Finally, since $\gamma^{-1}\varepsilon<1$, we obtain $f(x_k)\to f(\bar x)$ as $k \to \infty$.
	\end{proof}
	

	\section{Further discussion}\label{further}
	Let us recall that the \textit{Fenchel-Moreau subdifferential} of a (not necessarily convex) function $f:X\to\mathbb R\cup \{+\infty\}$ at point $x\in X$ is given by
	\begin{align*}
		\partial f(x):= \left\lbrace \xi\in X^*:\, \text{$x$ is a minimizer of ${f}-\xi$}\right\rbrace. 
	\end{align*}
	We will now discuss both local and global characterizations relating the previous subdifferential with growth conditions. 
	\subsection{Subregularity of the  Fenchel-Moreau subdifferential}\label{subsectionmetric}
	Lemma~\ref{lem.Z_set} gives a set of directions for which the growth condition is satisfied. We are going to state some instances in which one can conclude that this set is a neighborhood or even the whole space. A careful analysis of the proof of Lemma \ref{lem.Z_set} reveals that reflexivity of the underlying space and the lower  semicontinuity of the function are superfluous assumptions. 
	\begin{theorem}\label{thm.grow.subreg}
		Let $X$ be real normed space, $f: X \to \R \cup \{ + \infty \}$  a proper function and $\bar x \in (\partial f)^{-1}(0)$. The following assertions hold.
		\begin{itemize}
			\item[$(i)$]  If there exist $\beta \in (0, + \infty]$, $\kappa \in (0, + \infty)$ and a dense subset $\mathcal{W} \subset \mathbb{B}_{*}(0, \beta)$ such that for every $\xi \in \mathcal{W}$ there exists $x \in X$ so that 
			\begin{align*}
				\xi \in \partial f(x)\quad \text{and}\quad \| x - \bar x \| \leq \kappa \| \xi \|^{\frac{q}{p}}.
			\end{align*}
			Then, for any $\delta\in(0,+\infty)$, 
			\begin{align}\label{growthmax}
				{f}(x)\ge{f}(\bar x) +\frac{1}{2}\min\left\lbrace \frac{\beta}{\delta^{\frac{p}{q}}}, \frac{1}{(2\kappa)^{\frac{p}{q}}}\right\rbrace  \|x-\bar x\|^p\quad \text{for every $x\in \mathbb B(\bar x, \delta)$.}
			\end{align}

			\item[$(ii)$] Suppose that $\partial f: X\rightrightarrows X^*$ is $\frac{q}{p}$-subregular at $\bar x$ with parameters $\alpha\in(0,+\infty]$ and  $\kappa\in(0,+\infty)$. 
			Then, one has
			\begin{align*}
				{f}(x)\ge{f}(\bar x)+\frac{1}{2(2\kappa)^{\frac{p}{q}}}\|x-\bar x\|^p\quad \text{for every } x\in \bar x + 2\kappa\, J_{p}^{-1}\Big(\overline{\partial{f}\big(\mathbb B(\bar x,\alpha)\big)}\Big).
			\end{align*}  
		\end{itemize}
	\end{theorem}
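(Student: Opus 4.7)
The plan is to reduce both assertions to Lemma~\ref{lem.Z_set} applied with $h = f$. The hypothesis $\bar x \in (\partial f)^{-1}(0)$ is, by definition~\eqref{subdifferential} of the Fenchel--Moreau subdifferential, exactly the statement that $\bar x$ globally minimizes $f$, so the lemma applies. A quick inspection shows that the proof of Lemma~\ref{lem.Z_set} never uses reflexivity of $X$ or lower semicontinuity of $f$: it only appeals to the global minimality of $\bar x$ and the very definition of $\partial f$, so the extra generality of the hypotheses here costs nothing. Accordingly, I would fix
\begin{align*}
	Z := \bigcup_{x \in X}\bigl\{\xi \in X^* : \xi \in \partial f(x) \text{ and } \|x-\bar x\| \le \kappa\|\xi\|^{q/p}\bigr\}
\end{align*}
and use Lemma~\ref{lem.Z_set} to get $f(x) \ge f(\bar x) + \tfrac{1}{2}(\theta/(2\kappa))^{p/q}\|x - \bar x\|^p$ on $\bar x + 2\kappa\theta^{-1} J_p^{-1}(\overline Z)$ for every $\theta \in (0,1]$. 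All that remains is to show, in each part, that the stated set is contained in $\bar x + 2\kappa\theta^{-1} J_p^{-1}(\overline Z)$ for a suitable $\theta$.

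For part $(ii)$, the $q/p$-subregularity of $\partial f$ at $\bar x$ forces $\|x-\bar x\| \le \kappa\, d(0,\partial f(x))^{q/p} \le \kappa\|\xi\|^{q/p}$ for every $x \in \mathbb{B}(\bar x, \alpha)$ and every $\xi \in \partial f(x)$, which is precisely the membership condition for $Z$. Hence $\partial f(\mathbb{B}(\bar x, \alpha)) \subseteq Z$, so taking closures and preimages under $J_p$ gives $J_p^{-1}(\overline{\partial f(\mathbb{B}(\bar x, \alpha))}) \subseteq J_p^{-1}(\overline Z)$, and Lemma~\ref{lem.Z_set} with $\theta = 1$ delivers the estimate with the claimed constant $1/(2(2\kappa)^{p/q})$.

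For part $(i)$, the hypothesis amounts to $\mathcal{W} \subseteq Z$, and density of $\mathcal{W}$ in $\mathbb{B}_{*}(0,\beta)$ upgrades this to $\mathbb{B}_{*}(0,\beta) \subseteq \overline Z$. A Hahn--Banach argument (choose a norming functional at $x$, then rescale using $\|\xi\| = \|x\|^{p-1}$ together with $1/(p-1) = q/p$) identifies $J_p^{-1}(\mathbb{B}_{*}(0,\beta))$ with $\mathbb{B}(0, \beta^{q/p})$, hence $\mathbb{B}(\bar x, 2\kappa\theta^{-1}\beta^{q/p}) \subseteq \bar x + 2\kappa\theta^{-1} J_p^{-1}(\overline Z)$. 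To cover $\mathbb{B}(\bar x, \delta)$, I would then choose $\theta = \min\{1,\,2\kappa\beta^{q/p}/\delta\}$; the two cases $\theta = 1$ and $\theta = 2\kappa\beta^{q/p}/\delta$ plug into $(\theta/(2\kappa))^{p/q}$ to produce the two candidate constants $1/(2\kappa)^{p/q}$ and $\beta/\delta^{p/q}$ appearing in~\eqref{growthmax}, and the outer minimum in the statement records exactly which case is active. The main obstacle, such as it is, is purely bookkeeping: tracking the exponent $p/q = p-1$ through the two regimes of $\theta$, and verifying the set identity for $J_p^{-1}(\mathbb{B}_{*}(0,\beta))$ carefully; the degenerate endpoint $\beta = +\infty$ is harmless, since it forces the selection $\theta = 1$ automatically and makes the second argument of the minimum vacuously $+\infty$, leaving the $1/(2(2\kappa)^{p/q})$ term as the active one.
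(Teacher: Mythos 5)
Your proposal is correct and follows essentially the same route as the paper's proof: both parts reduce to Lemma~\ref{lem.Z_set} applied with $h = f$ and the set $Z$, and part~$(i)$ selects $\theta$ exactly as you do (namely $\theta = 1$ when $\delta \le 2\kappa\beta^{q/p}$ and $\theta = 2\kappa\beta^{q/p}/\delta$ otherwise), while part~$(ii)$ uses $\partial f(\mathbb{B}(\bar x,\alpha)) \subseteq Z$ and takes $\theta = 1$. The paper even makes the same remark you do about the lemma not actually needing reflexivity of $X$ or lower semicontinuity of $f$, so there is no substantive difference beyond the level of spelled-out detail.
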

	
	\begin{proof}$(i)$	Let $Z$ be the set in Lemma~\ref{lem.Z_set}. Clearly, $\mathcal W\subset Z$, and hence $\mathbb B_{*}\big(0,\beta\big)\subset \bar Z$. Since $J_p^{-1}\big(\mathbb B_{*}\big(0,\beta\big)\big) = \mathbb B\big(0,\beta^{\frac{q}{p}}\big)$, we conclude that, for any $\theta\in(0,1]$, 
		\begin{align}\label{growtheta}
			{f}(x)\ge{f}(\bar x)+\frac{1}{2}\Big(\frac{\theta}{2\kappa}\Big)^{\frac{p}{q}}\|x-\bar x\|^p\quad \forall x\in \mathbb B\Big(\bar x, \frac{2\kappa\beta^{\frac{q}{p}}}{\theta}\Big).
		\end{align}
		Let $\delta\in (0,2\kappa \beta^{\frac{q}{p}}]$, then choosing $\theta = 1$ in (\ref{growtheta}) yields (\ref{growthmax}). If $\delta\in (2\kappa \beta^{\frac{q}{p}},+\infty)$, we can choose $\theta = \delta^{-1}2\kappa\beta^{\frac{q}{p}}$ to conclude (\ref{growthmax}).
		\smallbreak
		$(ii)$ Again, let $Z$ be the set in Lemma~\ref{lem.Z_set}. Since $\partial f:X\rightrightarrows X^*$ is strongly metrically subregular at $\bar x$ with parameters $\alpha$ and $\kappa$, it follows that $\partial f\big(\mathbb B(\bar x,\alpha)\big)\subset Z$; whence the result can be concluded.
	\end{proof}
	
	\begin{remark}\normalfont
		One can easily find examples where functions  only satisfy a growth condition in a part of the domain. For example, consider $X = \R$ and $f(x) = \max \{ x, 0 \}^p$; the function $f$ satisfies a $p$-growth condition at $\bar x = 0$, but only in $(0, +\infty)$. Let  $X = \R^2$ and $g(x_1, x_2) = \max \{x_1, 0 \}^2 + x_2^2$; the function $g$ satisfies a quadratic growth condition at $(0,0)$, but only in $ [0, + \infty) \times \R$.
	\end{remark}
	
	In the following corollaries, we will show that the set $\mathcal{W}$ in Theorem~\ref{thm.grow.subreg} can be found and used in various scenarios.
	
	\begin{corollary}\label{corol.hemi-conti}
		Let $X$ be a real normed space, $f: X \to \R \cup \{ + \infty \}$ a proper function and $\bar x \in (\partial f)^{-1}(0)$. Suppose that $(\partial f)^{-1}: X^* \rightrightarrows X$ is lower hemicontinuous at zero. Then the following statements are equivalent.
		\begin{itemize}
			\item[(i)] There exist $\delta \in(0,+\infty]$ and $\gamma\in(0,+\infty)$ such that
			\begin{align*}
				{f}(x)\ge{f}(\bar x)+ \gamma\|x-\bar x\|^p, \text{ for every } x\in \mathbb B(\bar x, \delta).
			\end{align*} 
			
			\item[(ii)] The mapping  $\partial {f}: X \rightrightarrows X^*$ is strongly metrically $\frac{q}{p}$-subregular a $\bar x$.
		\end{itemize}
	\end{corollary}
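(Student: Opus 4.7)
The plan is to deduce the corollary directly from Theorem~\ref{thm.growth} and Theorem~\ref{thm.grow.subreg}. The hemicontinuity hypothesis is used only to convert strong subregularity into a statement about the size of the image of $\partial f$ near a neighborhood of $\bar x$; both implications otherwise follow from results already proven.

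For the direction $(i) \Rightarrow (ii)$, I would take $x \in \mathbb{B}(\bar x, \delta)$ with $\partial f(x) \neq \emptyset$ and any $\xi \in \partial f(x)$. By the definition of the Fenchel--Moreau subdifferential, $x$ is a \emph{global} minimizer of $f - \langle \xi, \cdot\rangle$, and hence in particular a minimizer over $\mathbb{B}(\bar x, \delta)$. The argument of Part~1 of the proof of Theorem~\ref{thm.growth} (which uses only the growth inequality and requires neither reflexivity nor RNP) gives $\|x - \bar x\| \le \gamma^{-q/p} \|\xi\|^{q/p}$. Taking the infimum over $\xi \in \partial f(x)$ yields $\|x - \bar x\| \le \gamma^{-q/p} d(0, \partial f(x))^{q/p}$, and the inequality is vacuously satisfied when $\partial f(x) = \emptyset$. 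This establishes strong $\tfrac{q}{p}$-subregularity at $\bar x$ with parameters $\delta$ and $\gamma^{-q/p}$.

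For $(ii) \Rightarrow (i)$, let $\alpha \in (0, + \infty]$ and $\kappa > 0$ be the subregularity parameters. Since $\bar x \in (\partial f)^{-1}(0)$ and $\bar x$ lies in the open set $V:=\mathrm{int}\,\mathbb{B}(\bar x, \alpha)$ (or $V = X$ if $\alpha = +\infty$), the lower hemicontinuity of $(\partial f)^{-1}$ at $0$ produces an open neighborhood of $0$ in $X^*$, which I may take of the form $\mathrm{int}\,\mathbb{B}_*(0, \beta)$ for some $\beta > 0$, such that $(\partial f)^{-1}(\xi) \cap V \neq \emptyset$ for every $\xi \in \mathbb{B}_*(0, \beta)$. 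Equivalently, $\mathbb{B}_*(0, \beta) \subset \partial f(\mathbb{B}(\bar x, \alpha))$, and a fortiori $\mathbb{B}_*(0, \beta) \subset \overline{\partial f(\mathbb{B}(\bar x, \alpha))}$. Since $J_p^{-1}(\mathbb{B}_*(0, \beta)) = \mathbb{B}(0, \beta^{q/p})$, the set $\bar x + 2\kappa\, J_p^{-1}\!\big(\overline{\partial f(\mathbb{B}(\bar x, \alpha))}\big)$ contains the ball $\mathbb{B}(\bar x, 2\kappa \beta^{q/p})$. Applying Theorem~\ref{thm.grow.subreg}$(ii)$ yields the growth inequality on this ball with $\gamma = \frac{1}{2(2\kappa)^{p/q}}$, which is precisely~(i) with $\delta = 2\kappa \beta^{q/p}$ (and $\delta = +\infty$ when $\alpha = +\infty$ and the hemicontinuity yields arbitrarily large $\beta$).

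The only non-mechanical step is the second one: extracting from lower hemicontinuity a genuine ball $\mathbb{B}_*(0, \beta)$ inside the image $\partial f(\mathbb{B}(\bar x, \alpha))$. This is routine once one recognizes that $(\partial f)^{-1}(0) \ni \bar x$ meets every open neighborhood of $\bar x$, so the definition can be triggered with $V$ an open ball around $\bar x$ contained in $\mathbb{B}(\bar x, \alpha)$. With that in hand, the rest is just bookkeeping with Theorem~\ref{thm.grow.subreg} and the duality map; no new ideas are required.
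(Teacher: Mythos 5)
Your proof is correct and follows essentially the same route as the paper: the forward direction is the Part~1 argument from the proof of Theorem~\ref{thm.growth} applied to a global minimizer of $f-\xi$, and the reverse direction uses lower hemicontinuity to place a dual ball inside the image of $\partial f$ near $\bar x$ and then invokes Lemma~\ref{lem.Z_set} through Theorem~\ref{thm.grow.subreg}. The only cosmetic difference is that you invoke part~$(ii)$ of Theorem~\ref{thm.grow.subreg} while the paper reaches for part~$(i)$ via the dense set $\mathcal W = \mathbb{B}_*(0,\beta)$; both amount to the same application of Lemma~\ref{lem.Z_set}, and your explicit citation is arguably cleaner than the paper's (which contains what appears to be a typo, referencing Theorem~\ref{thm.growth} where Theorem~\ref{thm.grow.subreg} is meant). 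One small inaccuracy: lower hemicontinuity alone does not yield arbitrarily large $\beta$ when $\alpha = +\infty$, so the parenthetical claim that $\delta = +\infty$ can be recovered in that case is unwarranted; but since the statement of $(i)$ only asserts the existence of some $\delta \in (0,+\infty]$, this does not affect the validity of the equivalence.
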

	
	\begin{proof}
		\textit{$(i)\implies(ii)$}. The proof can be done analogously as Part 1 in the proof of Theorem~\ref{thm.growth}.  
		\smallbreak
		\textit{$(ii)\implies(i)$}.  Let $\alpha\in(0,+\infty]$ and $\kappa\in(0,+\infty)$	the parameters of strong subregularity of  $({\partial{f}})^{-1}:X^* \rightrightarrows X$.  By lower hemicontinuity,  there exists $\beta\in(0,+\infty]$ such that $({\partial{f}})^{-1}(\xi)\cap \mathbb B(\bar x,\alpha)\neq\emptyset$ for all $\xi\in \mathbb B_{*}(0,\beta)$. 
		This implies that for any $\xi\in \mathbb B_{*}(0,\beta)$ there exists $x\in \mathbb B(\bar x,\alpha)$ such that $\xi\in \partial {f}(x)$; using the subregularity of $\partial{f}:X\rightrightarrows X^*$, we conclude that $\|x-\bar x\|\le \kappa\|\xi\|^{\frac{q}{p}}$. 
		The hypothesis of Theorem \ref{thm.growth} are hence satisfied and the result follows.
	\end{proof}
	
	We now state another direct corollary of Theorem~\ref{thm.grow.subreg}, in which the equivalence between global growth and subregularity is given under the assumption that the image of the subdifferential is dense.
	
	\begin{corollary}\label{macabrecor}Let $X$ be a real normed space, $f: X \to \R \cup \{ + \infty \}$  a proper function and $\bar x \in (\partial f)^{-1}(0)$. Suppose that $\overline{\im\,\partial{f} }= X^*$. Then, the following statements are equivalent.
		\begin{itemize}
			\item[$(i)$] There exists $\gamma\in(0,+\infty)$ such that
			\begin{align*}
				{f}(x)\ge{f}(\bar x) + \gamma\|x-\bar x\|^p \quad  \text{for every } x\in X.
			\end{align*} 
			
			\item[$(ii)$] There exists $\kappa\in(0,+\infty)$ such that $\partial {f}$ is metrically $\frac{q}{p}$-subregular with parameters $\alpha=+\infty$ and $\kappa$.
		\end{itemize}
	\end{corollary}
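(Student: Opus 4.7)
The plan is to deduce both implications directly from Theorem~\ref{thm.grow.subreg}, using $\im\,\partial f$ as the dense set $\mathcal W$ of directions in the dual. No new ideas beyond those in Part~1 of the proof of Theorem~\ref{thm.growth} and the technical Lemma~\ref{lem.Z_set} are needed.

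For the implication $(i)\implies(ii)$, I would reproduce the argument from Part~1 of Theorem~\ref{thm.growth}. Suppose $\xi\in\partial f(x)$, i.e., $x$ globally minimizes $f-\langle\xi,\cdot\rangle$. Then $f(x)-f(\bar x)\le\langle\xi,x-\bar x\rangle\le\|\xi\|\,\|x-\bar x\|$, and combining with the global growth inequality $\gamma\|x-\bar x\|^p\le f(x)-f(\bar x)$ yields $\|x-\bar x\|\le\gamma^{-q/p}\|\xi\|^{q/p}$. Since this bound holds for \emph{every} $\xi\in\partial f(x)$, we obtain $\|x-\bar x\|\le\gamma^{-q/p}d(0,\partial f(x))^{q/p}$ for all $x\in X$, which is exactly strong metric $\tfrac{q}{p}$-subregularity of $\partial f$ at $\bar x$ with parameters $\alpha=+\infty$ and $\kappa=\gamma^{-q/p}$.

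For the converse $(ii)\implies(i)$, I would invoke part $(i)$ of Theorem~\ref{thm.grow.subreg} with $\beta=+\infty$ and $\mathcal W:=\im\,\partial f$. By the hypothesis $\overline{\im\,\partial f}=X^*$, the set $\mathcal W$ is dense in $\mathbb B_{*}(0,+\infty)=X^*$. For any $\xi\in\mathcal W$, by definition of the image there exists $x\in X$ with $\xi\in\partial f(x)$; the subregularity assumption with $\alpha=+\infty$ then gives $\|x-\bar x\|\le\kappa\, d(0,\partial f(x))^{q/p}\le\kappa\|\xi\|^{q/p}$. The hypotheses of Theorem~\ref{thm.grow.subreg}$(i)$ are thus satisfied, so for every $\delta\in(0,+\infty)$ one gets
\[
f(x)\ge f(\bar x)+\frac12\min\!\left\{\frac{\beta}{\delta^{p/q}},\frac{1}{(2\kappa)^{p/q}}\right\}\|x-\bar x\|^p\quad\text{for every }x\in\mathbb B(\bar x,\delta).
\]
Because $\beta=+\infty$, the minimum equals $(2\kappa)^{-p/q}$ irrespective of $\delta$; letting $\delta\to+\infty$ yields the global growth condition with $\gamma=2^{-1}(2\kappa)^{-p/q}$.

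The only subtle point is the bookkeeping at $\alpha=+\infty$ and $\beta=+\infty$: one must check that the definitions of strong metric subregularity and of Theorem~\ref{thm.grow.subreg}$(i)$ remain meaningful in these limiting regimes and produce the global statements claimed. This is straightforward from the definitions, so I expect no genuine obstacle; the content of the corollary is that density of $\im\,\partial f$ is exactly what is needed to upgrade the pointwise bound supplied by subregularity into a covering of $X^*$ by the set $Z$ of Lemma~\ref{lem.Z_set}.
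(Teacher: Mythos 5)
Your proof is correct and follows essentially the same route as the paper: the forward implication by the elementary argument from Part 1 of the proof of Theorem~\ref{thm.growth}, and the converse by applying Theorem~\ref{thm.grow.subreg}(i) with $\mathcal W:=\im\,\partial f$ and $\beta=+\infty$, using the density hypothesis $\overline{\im\,\partial f}=X^*$. The constant bookkeeping ($\gamma=2^{-1}(2\kappa)^{-p/q}$ and $\kappa=\gamma^{-q/p}$) and the treatment of the limiting parameters $\alpha=\beta=+\infty$ are both handled as the paper intends.
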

	\begin{proof}
		The implication $(i)\implies(ii)$ is straightforward and follows the same argument given in the first part of the proof of Theorem \ref{thm.growth}. For the converse implication, let $\mathcal W:=\im\,\partial f$, and apply Theorem \ref{thm.grow.subreg}.
	\end{proof}
	
	Finally, to close the subsection, we give sufficient conditions for the image of the subdifferential of a function to be dense in the dual.

	\begin{proposition}\label{prop.clo_IM}
		Suppose that $X$ is a Banach space and that ${f}:X\to\mathbb R\cup\{+\infty\}$ is superlinear. Then the following statements hold.
		\begin{itemize}
			\item[(i)] If $X$ is reflexive and ${f}$ is weakly lower semicontinuous, then $\im \, \partial f = X^*$.
			
			\item[(ii)] If $\conv(\dom{f})$ has the RNP, then $\overline{\im \, \partial{f}} = X^*$.
		\end{itemize}
	\end{proposition}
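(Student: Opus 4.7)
The point of departure is the tautological reformulation $\xi\in\im\,\partial f\iff f-\langle\xi,\cdot\rangle$ attains its infimum on $X$. Both parts thus boil down to producing minimizers of linearly tilted versions of $f$; the difference lies in how many $\xi$ we can handle.

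For (i), I will run the direct method of the calculus of variations. Fix any $\xi\in X^*$. Superlinearity applied with any $c>\|\xi\|$ produces $\delta>0$ with
\[
f(x)-\langle\xi,x\rangle\ge(c-\|\xi\|)\|x\|\quad\text{for all }\|x\|>\delta,
\]
so $f-\xi$ is coercive. Since $f$ is proper, picking $x_0\in\dom f$ gives $\inf(f-\xi)\le f(x_0)-\langle\xi,x_0\rangle<+\infty$, hence every minimizing sequence is norm-bounded by coercivity. Reflexivity of $X$ produces a weakly convergent subsequence $x_{n_k}\rightharpoonup x^*$, and weak lower semicontinuity of $f$ together with weak continuity of $\xi$ yields $f(x^*)-\langle\xi,x^*\rangle\le\liminf_k(f(x_{n_k})-\langle\xi,x_{n_k}\rangle)=\inf(f-\xi)$. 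Thus $x^*$ is a global minimizer, so $\xi\in\im\,\partial f$; as $\xi$ was arbitrary, $\im\,\partial f=X^*$.

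For (ii), I will combine Stegall's variational principle with a translation trick. Fix $\xi\in X^*$ and $\varepsilon>0$. The shifted function $g:=f-\langle\xi,\cdot\rangle$ remains proper, lower semicontinuous, and superlinear, and $\conv(\dom g)=\conv(\dom f)$ has the RNP. Applying Lemma~\ref{LemSte} to $g$ yields a dense set $\Theta\subset X^*$ such that $g-\nu=f-\langle\xi+\nu,\cdot\rangle$ admits a (unique) minimizer for every $\nu\in\Theta$. Choosing $\nu\in\Theta$ with $\|\nu\|<\varepsilon$ places $\xi+\nu$ in $\im\,\partial f$ at distance less than $\varepsilon$ from $\xi$, establishing density of $\im\,\partial f$ in $X^*$.

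Main obstacle: The only delicate bookkeeping is verifying that $g=f-\langle\xi,\cdot\rangle$ meets the ``bounded from below'' hypothesis built into Lemma~\ref{LemSte}. This is exactly the step carried out inside the proof of that lemma: superlinearity controls $g$ outside a large ball, while the standing lower bound on $f$ handles the bounded remainder. Beyond this technical verification, the argument is a routine invocation of the direct method in (i) and of Stegall's principle in (ii).
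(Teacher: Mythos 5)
Your proof of~(i) is correct and coincides with the paper's: superlinearity gives coercivity of $f-\xi$, reflexivity plus weak lower semicontinuity then furnish a minimizer by the direct method. Your proof of~(ii) is also correct but slightly roundabout: the translation trick is not needed. Lemma~\ref{LemSte} applied \emph{once}, directly to $f$, already produces a dense set $\Theta\subset X^*$ such that $f-\eta$ attains its minimum for every $\eta\in\Theta$; by the very definition of the Fenchel--Moreau subdifferential this means $\Theta\subset\im\,\partial f$, and density of $\Theta$ immediately yields $\overline{\im\,\partial f}=X^*$. Shifting to $g=f-\xi$ and then invoking Stegall again only re-derives this density pointwise around each $\xi$, at the cost of checking that $g$ inherits all the hypotheses; it buys nothing here. (Your ``main obstacle'' paragraph correctly flags that Lemma~\ref{LemSte} requires a lower bound and lower semicontinuity; these are standing hypotheses on $f$ from Section~2, and superlinearity then transfers them to any linear tilt, so the bookkeeping is harmless.)
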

	\begin{proof}
		$(i)$ We note that for each $\xi\in X^*$ the function $f-\xi$ is weakly lower semicontinuous and coercive, and hence it has at least one minimizer. One can directly obtain $(ii)$ from Lemma~\ref{LemSte}.
	\end{proof}

    {
    \begin{remark}\label{gremark}
        We point out that the strong metric subregularity of the Fenchel-Moreau subdifferential is related to stability of minimizers only. This not necessarily the same as critical points. To exemplify this, consider   $f:\mathbb R\to\mathbb R\cup\{+\infty\}$  given by
	\[ 
	f(x):=
	\begin{cases}
		2x^2 + x^3 \sin\left(\dfrac{1}{x^2} \right), & x \in [-1,1]\setminus\{0\}, \\
        0, & x=0\\
		+\infty, & \text{otherwise},
	\end{cases}
	\]
	This function $f$ satisfies the growth condition in Corollary \ref{macabrecor}$-(i)$ at $\bar x = 0$ and hence its Fenchel-Moreau subdifferential is strong metrically subregular at $\bar x$. 
    We see that there are infinitely many local minimizers close to $\bar x$. Consequently, any subdifferential for which local minimizers be critical points (such as the limiting, Fr\'echet or Clarke subdifferentials) cannot be strongly metrically subregular at $\bar x$.
    \end{remark}
    }
    
	\subsection{On the Polyak--\L{}ojasiewicz--type inequality}\label{subloj}
	In Theorem~\ref{thm.growth}, we have shown that a growth condition is equivalent to an analog of the so-called Polyak-\L ojasiewicz (replacing the gradient by the norm of linear perturbations).
	We will state a global version of this theorem in this subsection. 
	
	\begin{theorem}\label{thm.Loja}
		Let $X$ be a Banach space, $f: X \to \R \cup \{ + \infty \}$  a proper lower semicontinuous function and  $\bar x \in (\partial f)^{-1}(0)$.  Supposed that  $\conv (\dom \, f)$ is a bounded set with the RNP.
		Then, the following assertions are equivalent.
		\begin{itemize}
			\item[$(i)$] there exists $\gamma > 0$ such that
			\begin{align*}
				f(x) \geq f(\bar x) + \gamma \| x - \bar x \|^p\quad \text{for every } x \in X;
			\end{align*}
			\item[$(ii)$] there exists $\mu > 0$ such that
			\begin{align*}
				f(x) - f(\bar x) \leq \mu d(0, \partial f(x))^q \quad\text{for every } x \in X.
			\end{align*}
		\end{itemize}
	\end{theorem}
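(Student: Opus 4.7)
The plan is to mirror the proof of Theorem~\ref{thm.growth}, but now in the global setting. Since $\dom f$ is bounded, the auxiliary indicator function used in the local argument is unnecessary and Corollary~\ref{coroloja} can be applied directly to $h = f$. For the easy direction (i)~$\Rightarrow$~(ii), I would argue as in Part~1 of Theorem~\ref{thm.growth}: fix $x \in X$ with $\partial f(x) \neq \emptyset$ (otherwise (ii) is vacuous since $d(0,\partial f(x)) = +\infty$), pick $\xi \in \partial f(x)$, use that $x$ globally minimizes $f - \xi$ to obtain $f(x) - f(\bar x) \le \langle \xi, x - \bar x\rangle \le \|\xi\|\|x - \bar x\|$, combine with (i) to derive $\|x - \bar x\| \le \gamma^{-q/p}\|\xi\|^{q/p}$ (using $p-1 = p/q$), substitute back, and take the infimum over $\xi \in \partial f(x)$ to obtain (ii) with $\mu = \gamma^{-q/p}$.

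For the hard direction (ii)~$\Rightarrow$~(i), I would first verify that Corollary~\ref{coroloja} applies to $h = f$: $\conv(\dom f)$ has the RNP by hypothesis, and $(f - f(\bar x))^{1/p}$ is superlinear because its effective domain $\dom f$ is bounded. Fix $x \in \dom f \setminus (\partial f)^{-1}(0)$ and $\upsilon \in (1,+\infty)$. The corollary supplies sequences $\{\hat x_n\} \subset X \setminus \{\bar x\}$ and $\{\xi_n\} \subset X^* \setminus \{0\}$ with $p(f(\hat x_n) - f(\bar x))^{1/q}\,\xi_n \in \partial f(\hat x_n)$ and $\|\xi_n\| \to \upsilon\|x - \bar x\|^{-1}(f(x) - f(\bar x))^{1/p}$. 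Provided $f(\hat x_n) > f(\bar x)$ for infinitely many $n$, plugging the displayed subgradient into hypothesis (ii) at $\hat x_n$ gives $f(\hat x_n) - f(\bar x) \le \mu p^q (f(\hat x_n) - f(\bar x)) \|\xi_n\|^q$; dividing yields $1 \le \mu p^q \|\xi_n\|^q$, and sending $n \to \infty$ followed by $\upsilon \searrow 1$ produces the growth estimate at $x$.

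The anticipated obstacle is the interplay with a possible second global minimizer. The leftover case $x \in (\partial f)^{-1}(0) \setminus \{\bar x\}$ cannot be handled by Corollary~\ref{coroloja} at all, and (ii) there reads $0 \le 0$, whereas (i) would force $x = \bar x$; similarly, controlling the case $f(\hat x_n) = f(\bar x)$ inside the main argument is delicate because the exhibited subgradient $p(f(\hat x_n)-f(\bar x))^{1/q}\xi_n$ collapses to $0$. Both difficulties dissolve under the natural reading that $\bar x$ is a strict global minimizer---the global analog of the strict local minimizer used throughout the paper---because (i) itself forces $(\partial f)^{-1}(0) = \{\bar x\}$. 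If one prefers to keep the statement without strictness, the missing step is to deduce this uniqueness from (ii), most likely by combining Lemma~\ref{LemSte} with the boundedness of $\dom f$ to approximate any secondary minimizer by unique minimizers of small linear perturbations and derive a contradiction; this is where I would expect to spend the most care.
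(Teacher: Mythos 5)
Your proof follows the paper's essentially verbatim: the authors' own argument for Theorem~\ref{thm.Loja} is literally ``Part~1 of Theorem~\ref{thm.growth} for (i)$\Rightarrow$(ii), and Part~3 applied directly to $h=f$ (the indicator truncation being unnecessary since $\dom f$ is already bounded) for (ii)$\Rightarrow$(i),'' which is exactly what you write. The strictness issue you isolate is a genuine gap and not merely cosmetic: the hypothesis $\bar x\in(\partial f)^{-1}(0)$ only makes $\bar x$ a global minimizer, whereas both the conclusion~(i) and the division step $f(\hat x_n)-f(\bar x)>0$ in Part~3 require $\bar x$ to be the \emph{unique} global minimizer.

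However, your proposed repair---extracting uniqueness from (ii) via Lemma~\ref{LemSte}---cannot succeed, because (ii) does not imply uniqueness. Take $X=\R$, $p=q=2$, $f(x)=\min\{x^2,(x-2)^2\}+\iota_{[-1,3]}(x)$, and $\bar x=0$. A direct computation gives $\partial f(x)=\{2x\}$ on $(-1,0)$, $\partial f(x)=\{2(x-2)\}$ on $(2,3)$, $\partial f(x)=\{0\}$ at $x\in\{0,2\}$, $\partial f(-1)=(-\infty,-2]$, $\partial f(3)=[2,+\infty)$, and $\partial f(x)=\emptyset$ on $(0,2)$ and off $[-1,3]$. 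With these values one checks
\[
f(x)-f(\bar x)\le\tfrac14\,d\bigl(0,\partial f(x)\bigr)^2\quad\text{for every }x\in X,
\]
so (ii) holds with $\mu=\tfrac14$, while (i) fails because $f(2)=f(0)$. Thus (ii) is strictly weaker than (i) under the stated hypotheses, and the theorem as printed is false. The intended reading is precisely the one you guessed---that $\bar x$ is a strict global minimizer, the global analog of the standing assumption in Section~1---and it must be added to the hypotheses rather than deduced from (ii). With it in place, $\hat x_n\neq\bar x$ forces $f(\hat x_n)>f(\bar x)$, the division you perform is legitimate, and the rest of your argument closes the proof exactly as the paper intends.
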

	\begin{proof} \textit{$(i)\implies(ii)$.} This is the same as in Part 1 of proof of Theorem~\ref{thm.growth}. 
		\smallbreak
		$(ii)\implies(i)$. The proof remains the same as in Part 3 of the proof of Theorem \ref{thm.growth}; the only differences are that the reflexivity of $X$ is not needed to establish that $\conv(\dom f)$ has the RNP, and that the superlinearity of $\sqrt[\leftroot{1}\uproot{4}p]{f(\cdot)-f(\bar x)}$ follows from the boundedness of the domain. 
	\end{proof}
	
	\section{An application to a tracking problem}\label{elliptic}

	In this section, we apply our results to an elliptic tracking problem without regularization terms. While we focus on a specific tracking problem for simplicity, the same arguments extend to a wide range of problems, including those constrained by parabolic PDEs or the Navier-Stokes equations.
	\smallbreak 
	The main result is that a second-order condition is equivalent to the stability of solutions to the problem with respect to the tracking data. 
	Although elliptic tracking problems have been extensively studied \cite{C_2012,CWW_2017,CWW_2018,J_2024,KNS_2017,PW_2018,QL_2024,QW_2018,W_2015}, this particular result has not been previously established. This finding is crucial for understanding the necessary assumptions to ensure the robustness of optimal solutions under data uncertainty. 
	\subsection{An elliptic tracking problem}\label{genSec}
	Let $d\in\{2,3\}$ and consider a bounded Lipschitz domain $\Omega\subset\mathbb R^d$. For $\alpha,\beta\in \mathbb R$ with $\alpha<\beta$, the feasible set is given by
	\begin{align*}
		\mathcal U:= \left\lbrace  u\in L^1(\Omega):\, u(x)\in[\alpha, \beta]\,\,\,\, \text{for a.e. $x\in\Omega$} \right\rbrace.
	\end{align*}
	For each $u\in\mathcal U$, there is a unique  associated state $y_u\in H_{0}^1(\Omega)\cap C(\bar\Omega)$ satisfying
	\begin{align}\label{varine}
		\int_{\Omega} \langle \nabla y_u, \nabla \varphi \rangle\, dx + \int_{\Omega} y^3_{u} \varphi \, dx = \int_{\Omega} u\varphi\, dx\quad \text{ for every } \varphi \in H_{0}^1(\Omega).
	\end{align}
	Formulation (\ref{varine}) says that $y_u$ is the weak solution of equation $-\Delta y + y_u^3 = u$. 
	\smallbreak
	Given a target $y_d\in L^2(\Omega)$, we consider the  elliptic tracking problem
	\begin{align*}
		(\mathcal P)\,\,\,\,\,\,	\min_{u\in\mathcal U} \left\lbrace\frac{1}{2} \int_{\Omega} | y_{u} - y_d |^2 dx \right\rbrace .
	\end{align*}
	The objective functional $\mathcal J:\mathcal U\to\mathbb R$ associated with problem $\mathcal P$ is given by $\mathcal{J}(u):= \frac{1}{2}\int_{\Omega}|y_u-y_d|^2\,dx$. 
	In order to give a good expression for its Gateaux differential, it is often convenient to introduce an adjoint state.  
	For each $u\in\mathcal U$, there is a unique associated costate $p_u\in H_0^1(\Omega)\cap C(\bar \Omega)$ satisfying
	\begin{align}\label{formulation}
		\int_{\Omega}\langle \nabla p_{u},\nabla\varphi \rangle \, dx + \int_{\Omega} 3y_u^2p_u \varphi dx = \int_{\Omega} (y_{u} - y_{d})\varphi dx \quad \text{ for every } \varphi\in H_0^1(\Omega).
	\end{align}
	This means that $p_u$ is the weak solution of equation $-\Delta p + 3y_u^2 p = y_u - y_d$. After straightforward calculations we obtain that $\nabla\mathcal J(u) = p_u $ for all $u\in\mathcal U$.
	\smallbreak
	From now on, we will fix a reference element ${\bar u}\in\mathcal U$ in the feasible set.
	\begin{lemma}\label{lem.1st-cond}
		If ${\bar u}\in\mathcal U$ is a minimizer of problem $\mathcal P$, then 
		\begin{align*}
			\int_{\Omega} \langle p_{{\bar u}}, u - {\bar u} \rangle\, dx \ge0 \quad \text{ for every } u\in\mathcal U. 
		\end{align*}
	\end{lemma}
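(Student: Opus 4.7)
The plan is to exploit the convexity of the feasible set $\mathcal U$ together with the already-noted identity $\nabla\mathcal J(u)=p_u$ to derive the variational inequality by the standard perturbation argument. First I would fix an arbitrary $u\in\mathcal U$ and observe that, since $\mathcal U$ is described by pointwise bilateral bounds $[\alpha,\beta]$, it is convex; consequently the convex combination ${\bar u}+t(u-{\bar u})$ lies in $\mathcal U$ for every $t\in[0,1]$. Minimality of ${\bar u}$ then yields $\mathcal J({\bar u}+t(u-{\bar u}))\ge \mathcal J({\bar u})$ for every such $t$.

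Next I would divide by $t>0$ and pass to the limit $t\searrow 0$, obtaining the one-sided directional derivative inequality
\[
\left.\frac{d}{dt}\mathcal J({\bar u}+t(u-{\bar u}))\right|_{t=0^+}\ge 0.
\]
Invoking the identity $\nabla\mathcal J({\bar u})=p_{\bar u}$ recorded in the paragraph preceding the lemma, this one-sided derivative coincides with $\int_\Omega p_{\bar u}\,(u-{\bar u})\,dx$, which is exactly the asserted inequality. The argument is therefore essentially a template first-order optimality statement once differentiability of $\mathcal J$ is in hand.

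The only nontrivial point, already absorbed into the identity $\nabla\mathcal J(u)=p_u$ in the excerpt but worth recording in the plan, is Gateaux differentiability of $\mathcal J$ along admissible directions $h=u-{\bar u}\in L^\infty(\Omega)$. This reduces to showing that $t\mapsto y_{{\bar u}+th}$ is differentiable at $t=0$ in $L^2(\Omega)$, with derivative $z\in H_0^1(\Omega)\cap C(\bar\Omega)$ solving the linearized equation $-\Delta z+3y_{\bar u}^2 z=h$ in the weak sense. Testing (\ref{formulation}) against $z$ and testing the linearized equation against $p_{\bar u}$ would then give the duality identity
\[
\int_\Omega (y_{\bar u}-y_d)\,z\,dx=\int_\Omega p_{\bar u}\, h\,dx,
\]
which combined with the chain rule for $\mathcal J$ reproduces $\langle \nabla\mathcal J({\bar u}),h\rangle=\int_\Omega p_{\bar u}\,h\,dx$. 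Since $y\mapsto y^3$ is monotone and smooth and $\Omega$ has a Lipschitz boundary, the linearization step is a standard consequence of the implicit function theorem for the control-to-state map of semilinear elliptic equations, so this is a minor technical verification rather than a genuine obstacle.
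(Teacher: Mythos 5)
The paper does not actually supply a proof of this lemma: it is stated and immediately glossed as ``condensing the first-order necessary condition,'' with no argument given. Your proposal fills that gap with precisely the standard argument the authors are implicitly invoking: $\mathcal U$ is convex because it is cut out by pointwise bilateral bounds, so $\bar u + t(u-\bar u)\in\mathcal U$; minimality plus one-sided differentiation in $t$ gives $\mathcal J'(\bar u)(u-\bar u)\ge 0$; and the adjoint representation $\nabla\mathcal J(\bar u)=p_{\bar u}$, already recorded by the paper via \eqref{formulation}, turns this into $\int_\Omega p_{\bar u}(u-\bar u)\,dx\ge 0$. Your auxiliary remarks on Gateaux differentiability of the control-to-state map and the duality identity obtained by testing the adjoint equation against $z$ and the linearized state equation against $p_{\bar u}$ are exactly the technical content absorbed into the paper's assertion $\nabla\mathcal J(u)=p_u$; these are standard for semilinear elliptic equations with a monotone smooth nonlinearity on a bounded Lipschitz domain. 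The argument is correct and is the canonical route; there is no divergence from what the paper expects the reader to supply.
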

	Lemma \ref{lem.1st-cond} condenses the first-order necessary condition. 
	Concerning second-order conditions, we refer to \cite[Theorem 2.4]{C_2012}. In order to give a useful expression for the second-order form of the objective functional, we introduce notation for linearized states. 
	\smallbreak
	For $v\in L^2(\Omega)$, we denote by $z_{v}\in H_0^1(\Omega)\cap C(\bar\Omega)$ the unique function satisfying
	\begin{align*}
		\int_{\Omega}\langle \nabla z_{v},\nabla\varphi \rangle \, dx + \int_{\Omega} 3y_{{\bar u}}^2z_v \varphi dx = \int_{\Omega} v\varphi dx \quad \text{for every } \varphi\in H_0^1(\Omega).
	\end{align*}
	This means that $z_v$ is the weak solution of $-\Delta z_v + 3y_{{\bar u}}^2 z_v = v$. Some straightforward calculations yield 
	\begin{align*}
		\mathcal J''({\bar u})v^2 = \int_{\Omega} \big(1 - 6 y_{{\bar u}}p_{{\bar u}}\big) z_{v}^2 dx \quad \text{ for every } v\in L^2(\Omega).
	\end{align*}
	We will employ the following characterization of growth for the objective functional. 
	\begin{proposition}\label{Prop}
		The following statements are equivalent. 
		\begin{itemize}
			\item[(i)] There exist numbers $\delta>0$ and $c>0$ such that 
			\begin{align*}
				\mathcal J'({\bar u})v + \frac{1}{2}\mathcal J''({\bar u})v^2\ge c\|z_v\|_{L^2(\Omega)}^2
			\end{align*}
			for all $v\in \mathcal U - {\bar u}$ satisfying $\|z_v\|_{L^2(\Omega)}\le\delta$.
			\smallbreak
			\item[(ii)] There exist numbers $\delta>0$ and $c>0$ such that 
			\begin{align*}
				\mathcal J(u) - \mathcal J({\bar u})\ge c \|y_{u}-y_{{\bar u}}\|_{L^2(\Omega)}^2
			\end{align*}
			for all $u\in\mathcal U$ satisfying $\|y_{u}-y_{{\bar u}}\|_{L^2(\Omega)}\le\delta$.
		\end{itemize}
	\end{proposition}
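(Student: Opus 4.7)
The plan is to reduce both implications to a single exact second-order decomposition of the cost, and then absorb the remainders by means of elliptic PDE estimates relating the linearized state $z_v$ to the nonlinear state difference $y_u - y_{\bar u}$. Testing the costate equation \eqref{formulation} for $p_{\bar u}$ against $y_u - y_{\bar u}$, and using the identity $a^3 - b^3 - 3b^2(a-b) = 3b(a-b)^2 + (a-b)^3$ to handle the cubic nonlinearity in the state equation, one obtains the exact identity
\begin{equation*}
\mathcal J(u)-\mathcal J({\bar u}) = \mathcal J'({\bar u})v + \tfrac{1}{2}\!\int_\Omega(1-6y_{\bar u}p_{\bar u})(y_u-y_{\bar u})^2\, dx - \int_\Omega p_{\bar u}(y_u-y_{\bar u})^3\, dx,
\end{equation*}
where $v = u-{\bar u}$. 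Since $\mathcal J''({\bar u})v^2 = \int_\Omega(1-6y_{\bar u}p_{\bar u})z_v^2\, dx$, this rewrites as
$\mathcal J(u) - \mathcal J({\bar u}) = \mathcal J'({\bar u})v + \tfrac{1}{2}\mathcal J''({\bar u})v^2 + R_1 + R_2$,
with $R_1 = \tfrac{1}{2}\int_\Omega(1-6y_{\bar u}p_{\bar u})[(y_u-y_{\bar u})^2 - z_v^2]\,dx$ and $R_2 = -\int_\Omega p_{\bar u}(y_u-y_{\bar u})^3\, dx$.

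The next step is to control the remainders through PDE estimates. Because $\mathcal U$ is $L^\infty$-bounded and $d\le 3$, elliptic regularity yields uniform bounds $\|y_u\|_{L^\infty}+\|p_u\|_{L^\infty}\le M$ on $\mathcal U$. Subtracting the state equations and comparing with the linearization that defines $z_v$, a standard energy argument delivers the two key bounds
\begin{equation*}
\|y_u-y_{\bar u}-z_v\|_{L^2(\Omega)}\le C\|z_v\|_{L^2(\Omega)}^2 \quad\text{and}\quad c\,\|z_v\|_{L^2(\Omega)} \le \|y_u-y_{\bar u}\|_{L^2(\Omega)}\le C\,\|z_v\|_{L^2(\Omega)},
\end{equation*}
valid whenever $\|z_v\|_{L^2}$ is sufficiently small. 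The first immediately gives $|R_1|\le C\|z_v\|_{L^2}^3$. For $R_2$, the Sobolev embedding $H^1(\Omega)\hookrightarrow L^6(\Omega)$ together with a uniform $H^1$-bound yields $|R_2|\le C\,\|y_u-y_{\bar u}\|_{L^6(\Omega)}\,\|y_u-y_{\bar u}\|_{L^2(\Omega)}^2$, and the $L^6$-factor tends to zero as $\|z_v\|_{L^2}\to 0$ via interpolation between the uniform $H^1$-bound and the $L^2$-smallness provided by the equivalence above.

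With these estimates in hand, both implications of the Proposition become nearly immediate. For (i)$\Rightarrow$(ii), given $u\in\mathcal U$ with $\|y_u-y_{\bar u}\|_{L^2}$ small, the norm equivalence guarantees $\|z_v\|_{L^2}\le\delta$, so \textit{(i)} applies and yields $\mathcal J'({\bar u})v + \tfrac{1}{2}\mathcal J''({\bar u})v^2 \ge c\,\|z_v\|_{L^2}^2 \ge c'\,\|y_u-y_{\bar u}\|_{L^2}^2$; inserting this into the decomposition and shrinking $\delta$ to absorb $R_1+R_2$ produces the growth \textit{(ii)}. For (ii)$\Rightarrow$(i), given $v\in\mathcal U-{\bar u}$ with $\|z_v\|_{L^2}\le\delta$, one applies \textit{(ii)} to $u={\bar u}+v$ and rearranges the decomposition to isolate $\mathcal J'({\bar u})v + \tfrac{1}{2}\mathcal J''({\bar u})v^2$ on the left, the remainders $R_1,R_2$ being again absorbed by shrinking $\delta$.

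The main obstacle will be the two-norm discrepancy in the bound on $R_2$: the controls live in an $L^\infty$-bounded but not $L^\infty$-small set, while smallness is prescribed only in the $L^2$-norm of $z_v$. Showing that $\|y_u-y_{\bar u}\|_{L^6}\to 0$ (and a fortiori that $|R_2|=o(\|z_v\|_{L^2}^2)$) as $\|z_v\|_{L^2}\to 0$ is the step that requires the most care, and it is exactly where the restriction $d\in\{2,3\}$ enters through the Sobolev embedding. Once this interpolation estimate is established, the two inequalities comparing the second-order form and the quadratic growth in $\|y_u-y_{\bar u}\|_{L^2}$ are equivalent up to harmless constants.
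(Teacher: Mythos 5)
Your approach — expand $\mathcal J(u)-\mathcal J(\bar u)$ exactly around $\bar u$ by testing the costate equation, isolate the second-order form $\mathcal J'(\bar u)v+\tfrac12\mathcal J''(\bar u)v^2$, and absorb the remainders by PDE estimates that compare $z_v$ with $y_u-y_{\bar u}$ — is exactly the strategy the authors have in mind: they simply cite the estimates in the relevant section of \texttt{CDJ\_2023} and call the rest Taylor expansion. Your exact identity is correct (one can check $a^3-b^3-3b^2(a-b)=(a-b)^2(a+2b)=3b(a-b)^2+(a-b)^3$ and the duality argument with $p_{\bar u}$), so the decomposition with $R_1,R_2$ is fine, as is the $R_1$ bound once the quadratic closeness $\|y_u-y_{\bar u}-z_v\|_{L^2}\lesssim \|z_v\|_{L^2}^2$ is granted.

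There is, however, a genuine gap in your treatment of $R_2=-\int_\Omega p_{\bar u}(y_u-y_{\bar u})^3\,dx$. The Hölder split you assert, $\int|w|^3\lesssim\|w\|_{L^6}\|w\|_{L^2}^2$ (with $w=y_u-y_{\bar u}$), is not a valid Hölder inequality: the conjugate exponents would have to satisfy $\tfrac16+\tfrac12+\tfrac12=1$, which fails. The sharpest estimate you can extract from the uniform $H^1\hookrightarrow L^6$ bound is of the type $\int|w|^3\le\|w\|_{L^6}\|w\|_{L^{12/5}}^2\lesssim\|w\|_{L^6}^{3/2}\|w\|_{L^2}^{3/2}\lesssim\|w\|_{L^2}^{3/2}$, which is only $o(\|w\|_{L^2})$, not $o(\|w\|_{L^2}^2)$, and hence cannot be absorbed into the quadratic growth term $c\|w\|_{L^2}^2$. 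Moreover, for $d=3$ the embedding $H^1\hookrightarrow L^6$ is critical, so interpolation between the $H^1$ bound and $L^2$-smallness gives no decay of $\|w\|_{L^6}$: your claim that the $L^6$-factor tends to zero is unjustified.

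The correct repair is to use the stronger elliptic regularity available here. Since $u-\bar u$ ranges over a bounded subset of $L^\infty(\Omega)$, the difference $w$ is uniformly bounded in $W^{2,p}(\Omega)$ for every finite $p$, hence in $C^{0,\alpha}(\overline\Omega)$. Interpolating $\|w\|_{L^\infty}\lesssim\|w\|_{C^{0,\alpha}}^{\theta}\|w\|_{L^2}^{1-\theta}$ for a suitable $\theta\in(0,1)$ yields $\|w\|_{L^\infty}\to 0$ as $\|w\|_{L^2}\to 0$; then $|R_2|\le\|p_{\bar u}\|_{L^\infty}\|w\|_{L^\infty}\|w\|_{L^2}^2=o(\|w\|_{L^2}^2)$, which is what you need. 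The same two-norm interpolation is what backs the norm equivalence $\|z_v\|_{L^2}\sim\|y_u-y_{\bar u}\|_{L^2}$ and the quadratic closeness estimate you invoke; these are precisely the estimates from \texttt{CDJ\_2023} to which the paper defers. With that repair, the rest of your argument for both implications goes through.
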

	\begin{proof}
		This is a straightforward application of Taylor's expansion formula, and the estimates in \cite[Section 3]{CDJ_2023}.
	\end{proof}
    {
	\begin{remark}
    We point out that the growth condition in Proposition-\ref{Prop}(i) can be satisfied for non--bang-bang controls, unlike growth conditions involving the control variable; see \cite[Section~2.1]{CWW_2018}. For example, if the tracking term $y_d \in L^2(\Omega)$ is chosen so that the optimal solution is a control $\bar u \in \mathcal U$ with $\bar u(x) \in (\alpha,\beta)$ for a.e.\ $x \in \Omega$ and $y_{\bar u}=y_d$, then $p_{\bar u}=0$, and the condition in Proposition-\ref{Prop}(i) is satisfied due to the form of the second variation.
	\end{remark}}
	\subsection{Sensitivity of minimizers with respect to the tracking data}\label{Mainrel}
	
	Consider the family $\{\mathcal P_{\eta}\}_{\eta\in L^2(\Omega)}$ of tracking problems
	\begin{align*}
		(\mathcal P_\eta)\,\,\,\,\,\min_{u\in\mathcal U} \left\lbrace \frac{1}{2}\int_{\Omega}\big|y_{u} - (y_d+\eta)\big|^2\, dx \right\rbrace.
	\end{align*}
	
	The next characterization is the main result of this section. 
	\begin{theorem}\label{THM}
		Suppose that ${\bar u}\in\mathcal U$ is a strict minimizer of the problem $(\mathcal{P})$.
		The following statements are equivalent. 
		\begin{itemize}
			\item[(i)] There exist numbers $\delta>0$ and $c>0$ such that 
			\begin{align*}
				\mathcal J'({\bar u})v + \frac{1}{2}\mathcal J''({\bar u})v^2\ge c\|z_v\|_{L^2(\Omega)}^2
			\end{align*}
			for all $v\in \mathcal U - {\bar u}$ satisfying $\|z_v\|_{L^2(\Omega)}\le\delta$.
			\smallbreak
			\item[(ii)] There exists a number $\kappa>0$ such that 
			\begin{align*}
				\|y_{u_\eta} - y_{{\bar u}}\|_{L^2(\Omega)}\le \kappa \|\eta\|_{L^2(\Omega)}
			\end{align*}
			for any  $\eta\in L^2(\Omega)$ and any minimizer $u_\eta\in\mathcal U$ of problem  $\mathcal P_\eta$.
		\end{itemize}
	\end{theorem}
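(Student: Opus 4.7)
My plan is to lift problem $(\mathcal P)$ from the control space to the state space $L^2(\Omega)$ and then invoke Theorem~\ref{thm.growth} with $X = L^2(\Omega)$ and $p = q = 2$. Set $\bar y := y_{\bar u}$, let $Y := \{y_u : u \in \mathcal U\} \subset L^2(\Omega)$ be the set of admissible states, and define $F : L^2(\Omega) \to \mathbb R \cup \{+\infty\}$ by
\[ F(y) := \tfrac{1}{2}\|y - y_d\|_{L^2(\Omega)}^2 + \iota_Y(y). \]
A direct expansion of the square in $(\mathcal P_\eta)$ gives, for every $u \in \mathcal U$ and every $\eta \in L^2(\Omega)$,
\[ \tfrac{1}{2}\|y_u - y_d - \eta\|_{L^2(\Omega)}^2 = F(y_u) - \langle \eta, y_u\rangle_{L^2(\Omega)} + c(\eta), \]
with $c(\eta)$ independent of $u$. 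Since the control-to-state map is injective (any $u$ is recovered from $y_u$ via $u = -\Delta y_u + y_u^3$), minimizers $u_\eta$ of $(\mathcal P_\eta)$ correspond bijectively to minimizers of $F - \langle \eta, \cdot \rangle_{L^2}$ on $L^2(\Omega)$; likewise, Proposition~\ref{Prop} identifies condition $(i)$ of the theorem with the local quadratic growth $F(y) - F(\bar y) \ge c \|y - \bar y\|_{L^2}^2$ on some $L^2$-ball around $\bar y$ (trivial outside $Y$).

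Next I would check that $F$ fits the setting of Theorem~\ref{thm.growth}. The space $L^2(\Omega)$ is reflexive, and $Y$ is weakly closed in $L^2(\Omega)$ because $\mathcal U$ is convex, bounded, and closed (hence weakly compact in $L^2$) and the control-to-state map is continuous from $L^2$-weak to $L^2$-strong via the compact embedding $H_0^1(\Omega) \hookrightarrow L^2(\Omega)$. Thus $F$ is proper, lower semicontinuous, and $\dom F = Y$ is bounded. Strict minimality of $\bar u$, together with injectivity of $u \mapsto y_u$, makes $\bar y$ a strict global minimizer of $F$; since $\dom F$ is bounded, the local quadratic growth can be upgraded to a global one by observing that on $\{y \in Y : \|y - \bar y\|_{L^2} \ge \delta\}$, weak lower semicontinuity and weak compactness of $Y$ force $F - F(\bar y)$ to have a strictly positive infimum, which, combined with the diameter bound of $Y$, produces a (possibly smaller) global growth constant.

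With these reductions, Theorem~\ref{thm.growth} applied to $F$ with $p = q = 2$ yields, through the equivalence $(i) \Leftrightarrow (ii)$, a constant $\kappa > 0$ such that every global $L^2$-minimizer $y^*$ of $F - \langle \eta, \cdot \rangle$ satisfies $\|y^* - \bar y\|_{L^2} \le \kappa \|\eta\|_{L^2}$ for every $\eta \in L^2(\Omega)$; through the bijection between minimizers of $(\mathcal P_\eta)$ and tilt-minimizers of $F$, this is precisely statement $(ii)$ of Theorem~\ref{THM}. The reverse implication is symmetric, applying $(ii) \Rightarrow (i)$ of Theorem~\ref{thm.growth} and then Proposition~\ref{Prop} to recover the second-order condition. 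The main obstacle I anticipate is the globalization step: Theorem~\ref{thm.growth} is stated with minimization over a fixed ball $\mathbb B(\bar x, \delta)$, whereas the minimizers $u_\eta$ of $(\mathcal P_\eta)$ are global and may a priori lie far from $\bar u$. Overcoming this requires combining the $L^\infty$-boundedness of $\mathcal U$ (hence the $L^2$-boundedness of $Y$) with weak lower semicontinuity and strict minimality, in the spirit of the bounded-domain extension pointed out in the remark following Theorem~\ref{thm.growth}.
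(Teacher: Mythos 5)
Your proposal follows essentially the same route as the paper: lift the problem to the state space $L^2(\Omega)$, define a functional (the paper's $f$ differs from your $F$ only by a constant) whose domain is the bounded set of admissible states, use the bijection $u \leftrightarrow y_u$ and Proposition~\ref{Prop} to translate between the second-order condition and local quadratic growth, globalize the growth by a compactness/strict-minimality argument (the paper argues by contradiction, extracting a weakly convergent sequence in $\mathcal U$ and using the weak-to-strong continuity of the control-to-state map—precisely the ingredients you identify), and finally invoke $(i)\Leftrightarrow(ii)$ of Theorem~\ref{thm.growth} with $p=q=2$. The plan is sound and matches the paper's proof in all essential respects.
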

	\begin{proof}
		Define the functional $f:L^2(\Omega)\to\mathbb R\cup\{+\infty\}$ by
		\begin{align*}
			f(y):=\begin{cases}
				\displaystyle \int_{\Omega}\Big( \frac{1}{2}y^2  + y_{d} y\Big)\, dx & \text{ if $y=y_u$ for some $u\in\mathcal U$},\\\\
				+\infty & \text{ otherwise}.
			\end{cases}
		\end{align*}
		By definition of $f$, we have the identity
		\begin{align}
			\mathcal J(u) - \mathcal J({\bar u})  = 	f(y_u) - f(y_{{\bar u}}) \quad \text{for every } u\in\mathcal U. 
		\end{align}
		Notice that $y_{{\bar u}}$ is a minimizer of $f$. Moreover note that for any $\xi\in L^2(\Omega)$ and $u\in\mathcal U$,
		\begin{align}\label{equiv0}
			\text{$y_u$ is a minimizer of $f-\xi$ if and only if $u$ is a minimizer of problem $\mathcal P_{y_{d}+\xi}$}.
		\end{align}
		We begin showing implication $(ii)\implies(i)$. 
		Let $\xi\in L^2(\Omega)$ and consider a minimizer $y = y_u$ of $f-\xi$. Then,  by (\ref{equiv0}) and the hypothesis, 
		\begin{align*}
			\| y_{u} - y_{{\bar u}} \|_{L^2(\Omega)} \le  \kappa \| \xi \|_{L^2(\Omega)}.
		\end{align*}
		By implication $(ii)\implies(i)$ of  Theorem \ref{thm.growth},  there exists $c>0$ such that 
		\begin{align*}
			\mathcal J(u) - \mathcal J({\bar u}) =  f(y_u) - f(y_{{\bar u}}) \ge c\|y_{u} - y_{{\bar u}}\|_{L^2(\Omega)}^2\quad\text{ for every } u\in\mathcal U. 
		\end{align*}
		The implication follows then from Proposition \ref{Prop}.
		\smallbreak
		Let us now prove implication $(i)\implies(ii)$. By Proposition \ref{Prop},  there exist $\delta>0$ and $c>0$ such that $\mathcal J(u) - \mathcal J({\bar u})\ge c\|y_{u}-y_{{\bar u}}\|_{L^2(\Omega)}^2$ for all $u\in\mathcal U$ with $\|y_{u}-y_{{\bar u}}\|_{L^2(\Omega)}\le\delta$. We claim that there exists $\varepsilon>0$ such that  $\mathcal J(u)-\mathcal J({\bar u})\ge \varepsilon$ for all $u\in\mathcal U$ such that $\|y_{u}-y_{{\bar u}}\|_{L^2(\Omega)}\ge \delta$. Otherwise, there would exist a sequence $\{u_n\}_{n\in\mathbb N}\subset\mathcal U$ satisfying $\|y_{u_n}-y_{{\bar u}}\|_{L^2(\Omega)}\ge \delta$ and $\mathcal J(u_n)\to \mathcal J({\bar u})$ as $n\to +\infty$. We could extract a  subsequence $\{u_{n_k}\}_{k\in\mathbb N}$ of $\{u_n\}_{n\in\mathbb N}$ converging weakly in $L^1(\Omega)$ to some $\hat u\in\mathcal U$. This would imply $\mathcal J(\hat u)=\mathcal J({\bar u})$, but since ${\bar u}$ is a strict minimizer, $\hat u = {\bar u}$, and consequently $y_{u_{n_k}}\to y_{{\bar u}}$ in $L^2(\Omega)$; a contradiction.  Set  $M:=\sup_{u\in\mathcal U}\| y_{u} - y_d \|_{L^2(\Omega)}$. Then, for all $u\in\mathcal U$ with $\|y_{u} - y_{{\bar u}}\|_{L^2(\Omega)}\ge\delta$,
		\begin{align*}
			\mathcal J(u) - \mathcal J(u)\ge \frac{\varepsilon}{M^2}\| y_{u} - y_{{\bar u}}\|_{L^2(\Omega)}^2.
		\end{align*}
		Setting $\gamma:=\min\{c,\varepsilon/M^2\}$, we see that 
		\begin{align*}
			f(y_{u}) - f(y_{{\bar u}}) = \mathcal J(u)-\mathcal J({\bar u})\ge \gamma\| y_{u} - y_{{\bar u}}\|_{L^2(\Omega)}^2\quad \text{for every } u\in\mathcal U.
		\end{align*}
		By implication $(i)\implies(ii)$ of Theorem \ref{thm.growth}, there exists $\kappa>0$ such that 
		\begin{align}\label{stabi}
			\| y_{u} - y_{{\bar u}} \|_{L^2(\Omega)}\le \kappa \|\xi\|_{L^2(\Omega)} 
		\end{align}
		for any $\xi\in L^2(\Omega)$ and  any minimizer $y_u\in L^2(\Omega)$ of $f-\xi$. The implication follows then by (\ref{stabi}).
	\end{proof}
	\begin{remark}
		In Theorem \ref{THM}, we have assumed that the minimizer is strict; while this is not always the case, one can prove that the set of targets that give a strict minimizer is generic in $L^2(\Omega)$. In some tracking problems, the existence of a strict minimizer (and hence a unique one) for every target can be achieved by making natural hypotheses on the data of the problem, see \cite[Corollary 5.3]{CW_2021}. There are also problems for which there is a tracking datum yielding more than one minimizer even with regularization terms, see \cite[Theorem 1.1]{P_2023}. In the case where the control is also tracked and the control system is not linear, there will always be targets yielding more than one minimizer, see \cite[Theorem 2.3]{CH_2022}.
	\end{remark}	
	\begin{remark}
		We remark that growth conditions with respect to the control can also be studied and characterized in terms of stability with respect to linear perturbations. Several papers have studied these relations in optimal control, we refer the interested reader to \cite{ANV_2025,DZZ_2023,JS_2024,JOV_2025,M_2024,OV_2020,QSV_2020}.
	\end{remark}
	\begin{remark}
	We point out that in Theorem \ref{THM}, the space where the equivalence $(i) \Leftrightarrow (ii)$ from Theorem \ref{thm.growth} is applied is $L^2(\Omega)$.
	\end{remark}


	\section*{Acknowledgments}
	The first author was supported by the Alexander von Humboldt Foundation, and the second one by the Austrian Science Fund (grant FWF P-36344N).
	We thank Eduardo Casas and Aris Daniilidis for their  comments and suggestions on the paper, and to Constantin Christof for insightful discussions on tracking problems. We are also grateful to the anonymous reviewers for their remarks. 
	\nocite{*}
	\bibliographystyle{siam}
	\bibliography{references}

\begin{thebibliography}{10}

\bibitem{AGS_2008}
{\sc L.~Ambrosio, N.~Gigli, and G.~Savar\'e}, {\em Gradient flows in metric
  spaces and in the space of probability measures}, Lectures in Mathematics ETH
  Z\"urich, Birkh\"auser Verlag, Basel, second~ed., 2008.

\bibitem{ANV_2025}
{\sc C.~T. Anh, N.~H.~H. Giang, and V.~H. Son}, {\em Bang--bang optimal control
  problems for the viscous camassa--holm equations}, Optimization,  (2025),
  pp.~1--29.

\bibitem{AG_2014}
{\sc F.~J. Arag\'{o}n~Artacho and M.~H. Geoffroy}, {\em Metric subregularity of
  the convex subdifferential in {B}anach spaces}, J. Nonlinear Convex Anal., 15
  (2014), pp.~35--47.

\bibitem{AG_2008}
{\sc F.~A. Artacho and M.~H. Geoffroy}, {\em Characterization of metric
  regularity of subdifferentials}, J. Convex Anal., 15 (2008), pp.~365--380.

\bibitem{AttoButt}
{\sc H.~Attouch, G.~Buttazzo, and G.~Michaille}, {\em Variational analysis in
  {S}obolev and {BV} spaces}, vol.~17 of MOS-SIAM Series on Optimization,
  Society for Industrial and Applied Mathematics (SIAM), Philadelphia, PA;
  Mathematical Optimization Society, Philadelphia, PA, second~ed., 2014.
\newblock Applications to PDEs and optimization.

\bibitem{AC_2004}
{\sc D.~Az\'e and J.-N. Corvellec}, {\em Characterizations of error bounds for
  lower semicontinuous functions on metric spaces}, ESAIM Control Optim. Calc.
  Var., 10 (2004), pp.~409--425.

\bibitem{MR2799393}
{\sc J.~Bolte, A.~Daniilidis, and A.~S. Lewis}, {\em Generic optimality
  conditions for semialgebraic convex programs}, Math. Oper. Res., 36 (2011),
  pp.~55--70.

\bibitem{BolteLoja}
{\sc J.~Bolte, A.~Daniilidis, O.~Ley, and L.~Mazet}, {\em Characterizations of
  \l ojasiewicz inequalities: subgradient flows, talweg, convexity}, Trans.
  Amer. Math. Soc., 362 (2010), pp.~3319--3363.

\bibitem{BNPS_2017}
{\sc J.~Bolte, T.~P. Nguyen, J.~Peypouquet, and B.~W. Suter}, {\em From error
  bounds to the complexity of first-order descent methods for convex
  functions}, Math. Program., 165 (2017), pp.~471--507.

\bibitem{VarTech}
{\sc J.~M. Borwein and Q.~J. Zhu}, {\em Techniques of variational analysis},
  vol.~20 of CMS Books in Mathematics/Ouvrages de Math\'{e}matiques de la SMC,
  Springer-Verlag, New York, 2005.

\bibitem{GeoRNP}
{\sc R.~D. Bourgin}, {\em Geometric aspects of convex sets with the
  {R}adon-{N}ikod\'{y}m property}, vol.~993 of Lecture Notes in Mathematics,
  Springer-Verlag, Berlin, 1983.

\bibitem{C_2012}
{\sc E.~Casas}, {\em Second order analysis for bang-bang control problems of
  {PDE}s}, SIAM J. Control Optim., 50 (2012), pp.~2355--2372.

\bibitem{CDJ_2023}
{\sc E.~Casas, A.~Dom\'inguez~Corella, and N.~Jork}, {\em New assumptions for
  stability analysis in elliptic optimal control problems}, SIAM J. Control
  Optim., 61 (2023), pp.~1394--1414.

\bibitem{CWW_2017}
{\sc E.~Casas, D.~Wachsmuth, and G.~Wachsmuth}, {\em Sufficient second-order
  conditions for bang-bang control problems}, SIAM J. Control Optim., 55
  (2017), pp.~3066--3090.

\bibitem{CWW_2018}
\leavevmode\vrule height 2pt depth -1.6pt width 23pt, {\em Second-order
  analysis and numerical approximation for bang-bang bilinear control
  problems}, SIAM J. Control Optim., 56 (2018), pp.~4203--4227.

\bibitem{CHN_2021}
{\sc N.~H. Chieu, L.~V. Hien, T.~T.~A. Nghia, and H.~A. Tuan}, {\em Quadratic
  growth and strong metric subregularity of the subdifferential via subgradient
  graphical derivative}, SIAM J. Optim., 31 (2021), pp.~545--568.

\bibitem{CQT_2022}
{\sc N.~H. Chieu, N.~T. Quynh~Trang, and H.~A. Tuan}, {\em Quadratic growth and
  strong metric subregularity of the subdifferential for a class of
  non-prox-regular functions}, J. Optim. Theory Appl., 194 (2022),
  pp.~1081--1106.

\bibitem{CH_2022}
{\sc C.~Christof and D.~Hafemeyer}, {\em On the nonuniqueness and instability
  of solutions of tracking-type optimal control problems}, Math. Control Relat.
  Fields, 12 (2022), pp.~421--431.

\bibitem{CW_2021}
{\sc C.~Christof and G.~Wachsmuth}, {\em On second-order optimality conditions
  for optimal control problems governed by the obstacle problem}, Optimization,
  70 (2021), pp.~2247--2287.

\bibitem{CiDo}
{\sc R.~Cibulka, A.~L. Dontchev, and A.~Y. Kruger}, {\em Strong metric
  subregularity of mappings in variational analysis and optimization}, J. Math.
  Anal. Appl., 457 (2018), pp.~1247--1282.

\bibitem{DZZ_2023}
{\sc K.~Djendel, H.~Zhang, and Z.~Zhou}, {\em On the regularity and stability
  of the {M}ayer-type convex optimal control problems}, Asian J. Control, 25
  (2023), pp.~1499--1509.

\bibitem{DJV_2022}
{\sc A.~Dom\'inguez~Corella, N.~Jork, and V.~Veliov}, {\em Stability in affine
  optimal control problems constrained by semilinear elliptic partial
  differential equations}, ESAIM Control Optim. Calc. Var., 28 (2022),
  pp.~Paper No. 79, 30.

\bibitem{D_2021}
{\sc A.~L. Dontchev}, {\em Lectures on variational analysis}, vol.~205 of
  Applied Mathematical Sciences, Springer, Cham, 2021.

\bibitem{MR1932710}
{\sc A.~L. Dontchev, A.~S. Lewis, and R.~T. Rockafellar}, {\em The radius of
  metric regularity}, Trans. Amer. Math. Soc., 355 (2003), pp.~493--517.

\bibitem{DR_2009}
{\sc A.~L. Dontchev and R.~T. Rockafellar}, {\em Implicit functions and
  solution mappings}, Springer Monographs in Mathematics, Springer, Dordrecht,
  2009.
\newblock A view from variational analysis.

\bibitem{DI_2015}
{\sc D.~Drusvyatskiy and A.~D. Ioffe}, {\em Quadratic growth and critical point
  stability of semi-algebraic functions}, Math. Program., 153 (2015),
  pp.~635--653.

\bibitem{DL_2013}
{\sc D.~Drusvyatskiy and A.~S. Lewis}, {\em Tilt stability, uniform quadratic
  growth, and strong metric regularity of the subdifferential}, SIAM J. Optim.,
  23 (2013), pp.~256--267.

\bibitem{DMN_2014}
{\sc D.~Drusvyatskiy, B.~S. Mordukhovich, and T.~T.~A. Nhgia}, {\em
  Second-order growth, tilt stability, and metric regularity of the
  subdifferential}, J. Convex Anal., 21 (2014), pp.~1165--1192.

\bibitem{HuffGeoRNP}
{\sc R.~E. Huff}, {\em The {R}adon-{N}ikod\'{y}m property for
  {B}anach-spaces---a survey of geometric aspects}, in Functional analysis:
  surveys and recent results ({P}roc. {C}onf., {P}aderborn, 1976),
  North-Holland Math. Studies, Vol. 27; Notas de Mat., No. 63, North-Holland,
  Amsterdam, 1977, pp.~1--13.

\bibitem{J_2024}
{\sc N.~Jork}, {\em Finite element error analysis of affine optimal control
  problems}, ESAIM Control Optim. Calc. Var., 30 (2024), pp.~Paper No. 60, 25.

\bibitem{JS_2024}
{\sc N.~Jork and J.~S.~H. Simon}, {\em Analysis of unregularized optimal
  control problems constrained by the boussinesq system}, 2024.

\bibitem{JOV_2025}
{\sc N.~A. Jork, N.~P. Osmolovskii, and V.~M. Veliov}, {\em Strong metric
  (sub)regularity in optimal control}, J. Convex Anal., 32 (2025),
  pp.~375--398.

\bibitem{KNS_2017}
{\sc B.~T. Kien, V.~H. Nhu, and N.~H. Son}, {\em Second-order optimality
  conditions for a semilinear elliptic optimal control problem with mixed
  pointwise constraints}, Set-Valued Var. Anal., 25 (2017), pp.~177--210.

\bibitem{Lassonde}
{\sc M.~Lassonde}, {\em Asplund spaces, {S}tegall variational principle and the
  {RNP}}, Set-Valued Var. Anal., 17 (2009), pp.~183--193.

\bibitem{L_2009}
{\sc D.~Leventhal}, {\em Metric subregularity and the proximal point method},
  J. Math. Anal. Appl., 360 (2009), pp.~681--688.

\bibitem{LM_2012}
{\sc G.~Li and B.~S. Mordukhovich}, {\em H\"older metric subregularity with
  applications to proximal point method}, SIAM J. Optim, 22 (2012),
  pp.~1655--1684.

\bibitem{M_2024}
{\sc I.~Mazari-Fouquer}, {\em Optimising the carrying capacity in logistic
  diffusive models: some qualitative results}, J. Differential Equations, 393
  (2024), pp.~238--277.

\bibitem{MO_2015}
{\sc B.~S. Mordukhovich and W.~Ouyang}, {\em Higher-order metric subregularity
  and its applications}, J. Global Optim., 63 (2015), pp.~777--795.

\bibitem{OV_2020}
{\sc N.~P. Osmolovskii and V.~M. Veliov}, {\em Metric sub-regularity in optimal
  control of affine problems with free end state}, ESAIM Control Optim. Calc.
  Var., 26 (2020), pp.~Paper No. 47, 19.

\bibitem{PL_2019}
{\sc S.~Pan and Y.~Liu}, {\em Metric subregularity of subdifferential and
  \textsc{K\L} property of exponent 1/2}, arXiv preprint arXiv:1812.00558,
  (2019).

\bibitem{P_2002}
{\sc T.~Pennanen}, {\em Local convergence of the proximal point algorithm and
  multiplier methods without monotonicity}, Math. Oper. Res., 27 (2002),
  pp.~170--191.

\bibitem{Penot}
{\sc J.-P. Penot}, {\em Calculus without derivatives}, vol.~266 of Graduate
  Texts in Mathematics, Springer, New York, 2013.

\bibitem{P_2023}
{\sc D.~Pighin}, {\em Nonuniqueness of minimizers for semilinear optimal
  control problems}, J. Eur. Math. Soc. (JEMS), 25 (2023), pp.~2127--2162.

\bibitem{PR_1998}
{\sc R.~A. Poliquin and R.~T. Rockafellar}, {\em Tilt stability of a local
  minimum}, SIAM J. Optim., 8 (1998), pp.~287--299.

\bibitem{PW_2018}
{\sc F.~P\"orner and D.~Wachsmuth}, {\em Tikhonov regularization of optimal
  control problems governed by semi-linear partial differential equations},
  Math. Control Relat. Fields, 8 (2018), pp.~315--335.

\bibitem{QL_2024}
{\sc N.~T. Qui and P.-D. Le~Thi}, {\em Stability for parametric control
  problems of {PDE}s via generalized differentiation}, Set-Valued Var. Anal.,
  32 (2024), pp.~Paper No. 13, 17.

\bibitem{QW_2018}
{\sc N.~T. Qui and D.~Wachsmuth}, {\em Stability for bang-bang control problems
  of partial differential equations}, Optimization, 67 (2018), pp.~2157--2177.

\bibitem{QSV_2020}
{\sc M.~Quincampoix, T.~Scarinci, and V.~M. Veliov}, {\em On the metric
  regularity of affine optimal control problems}, J. Convex Anal., 27 (2020),
  pp.~511--535.

\bibitem{R_1976}
{\sc R.~T. Rockafellar}, {\em Monotone operators and the proximal point
  algorithm}, SIAM J. Control Optim., 14 (1976), pp.~877--898.

\bibitem{R_2019}
\leavevmode\vrule height 2pt depth -1.6pt width 23pt, {\em Variational
  convexity and the local monotonicity of subgradient mappings}, Vietnam J.
  Math., 47 (2019), pp.~547--561.

\bibitem{VT_2009}
{\sc H.~van Ngai and M.~Th\'{e}ra}, {\em Error bounds for systems of lower
  semicontinuous functions in {A}splund spaces}, Math. Program., 116 (2009),
  pp.~397--427.

\bibitem{W_2015}
{\sc D.~Wachsmuth}, {\em Robust error estimates for regularization and
  discretization of bang-bang control problems}, Comput. Optim. Appl., 62
  (2015), pp.~271--289.

\bibitem{WS_2016}
{\sc J.~J. Wang and W.~Song}, {\em Characterization of quadratic growth of
  extended-real-valued functions}, J. Inequal. Appl.,  (2016), pp.~Paper No.
  29, 15.

\bibitem{ZT_1995}
{\sc R.~Zhang and J.~Treiman}, {\em Upper-{L}ipschitz multifunctions and
  inverse subdifferentials}, Nonlinear Anal., 24 (1995), pp.~273--286.

\bibitem{ZN_2015}
{\sc X.~Y. Zheng and K.~F. Ng}, {\em H\"{o}lder stable minimizers, tilt
  stability, and {H}\"{o}lder metric regularity of subdifferentials}, SIAM J.
  Optim., 25 (2015), pp.~416--438.

\end{thebibliography}
\end{document}